\numberwithin{equation}{section}
\newtheorem{theorem}{Theorem}[section]
\newtheorem{lemma}[theorem]{Lemma}
\newtheorem{corollary}[theorem]{Corollary}
\theoremstyle{definition}
\newtheorem{definition}[theorem]{Definition}
\newtheorem{remark}[theorem]{Remark}
\newtheorem{example}[theorem]{Example}
\newtheorem{notation}[theorem]{Notation}
\newtheorem{question}[theorem]{Question}
\DeclareMathOperator{\Ann}{Ann}
\DeclareMathOperator{\Ass}{Ass}
\newcommand{\K}{\mathcal{K}}
\newcommand{\N}{\mathbb{N}}
\newcommand{\mf}{\mathfrak{m}}
\newcommand{\HH}{\mathcal{H}}
\newcommand{\E}{\mathcal{E}}
\newcommand{\bm}{{\bf m}}
\begin{document}


\title[Hypergraphs and associated primes]{
Colorings of hypergraphs, perfect graphs,
and associated primes of powers of monomial ideals}

\author{Christopher A. Francisco}
\address{Department of Mathematics, Oklahoma State University,
401 Mathematical Sciences, Stillwater, OK 74078}
\email{chris@math.okstate.edu}
\urladdr{http://www.math.okstate.edu/$\sim$chris}

\author{Huy T\`ai H\`a}
\address{Tulane University \\ Department of Mathematics \\
6823 St. Charles Ave. \\ New Orleans, LA 70118, USA}
\email{tai@math.tulane.edu}
\urladdr{http://www.math.tulane.edu/$\sim$tai/}

\author{Adam Van Tuyl}
\address{Department of Mathematical Sciences \\
Lakehead University \\
Thunder Bay, ON P7B 5E1, Canada}
\email{avantuyl@lakeheadu.ca}
\urladdr{http://flash.lakeheadu.ca/$\sim$avantuyl/}

\keywords{hypergraphs, associated primes, monomial ideals, chromatic number,
perfect graphs, Alexander duality, cover ideals}
\subjclass[2000]{13F55, 05C17, 05C38, 05E99}
\thanks{Version: \today}

\begin{abstract}
There is a natural one-to-one correspondence between squarefree monomial ideals and finite simple hypergraphs via the cover ideal construction. Let $\HH$ be a finite simple hypergraph, and let $J = J(\HH)$ be its cover ideal in a polynomial ring $R$. We give an explicit description of all associated primes of $R/J^s$, for any power $J^s$ of $J$, in terms of the coloring properties of hypergraphs arising from $\HH$. We also give an algebraic method for determining the chromatic number of $\HH$, proving that
it is equivalent to a monomial ideal membership problem involving powers of $J$.
Our work yields two new purely algebraic characterizations of perfect graphs, 
independent of the Strong Perfect Graph Theorem; the first characterization is in terms 
of the sets $\Ass(R/J^s)$, while the second characterization 
is in terms of the saturated chain condition for associated primes.
\end{abstract}

\maketitle


\section{Introduction}

Our goal in this paper is to investigate an intimate connection between associated primes of powers of squarefree monomial ideals and the coloring properties of finite simple hypergraphs.

Throughout this paper, $\HH$ is a finite simple hypergraph on vertices $V_{\HH}=\{x_1,\dots,x_n\}$
with edge set $\mathcal{E}_{\HH}=\{E_1,\dots,E_t\}$, where the $E_i$ are subsets of
$\{x_1,\dots,x_n\}$ of cardinality at least two (so $\HH$ has no loops), and
$E_i \not \subseteq E_j$ for $i \not = j$ (no multiple edges). When $|E_i|=2$ for all $i$,
we have a finite simple graph, and we shall frequently specialize to this case to prove
results about graphs. Let $k$ be a field, and identify the vertices of $\HH$ with the
variables in a polynomial ring $R = k[x_1, \dots, x_n]$. Two primary notions to connect
the combinatorics of a hypergraph $\HH$ with commutative algebra are the \textbf{edge ideal}
\[I(\HH) =
(x_{i_1} \cdots x_{i_r} ~\big|~ \{x_{i_1}, \dots, x_{i_r}\} \in \mathcal{E}_{\HH}) \subseteq R, \]
and the \textbf{cover ideal}
\[ J(\HH) =
(x_{j_1} \cdots x_{j_l} ~\big|~ \{x_{j_1}, \dots, x_{j_l}\} \text{ is a vertex cover of } \HH)
\subseteq R.\]
The notion of an edge ideal was first introduced for graphs in \cite{V1} and extended to
hypergraphs in \cite{HVT2}. The cover ideal $J(\HH)$ can be realized as the squarefree
Alexander dual $I(\HH)^\vee$ of $I(\HH)$. Both the edge ideal and the cover ideal constructions give one-to-one correspondences between squarefree monomial ideals and finite simple hypergraphs. In this paper, we shall concentrate on the cover ideal $J(\HH)$ and its powers; and thus, complementing \cite{HHT,Hoa,V2}, we study squarefree monomial ideals and their powers.

Our work was initially inspired by attempts to understand perfect graphs from an algebraic
perspective. The Strong Perfect Graph Conjecture, formulated by Berge over 40 years ago, asserted
that a graph $G$ is perfect if and only if neither $G$ nor its complement contains an
induced cycle of odd length at least five. The Strong Perfect Graph Theorem (SPGT), which verifies
Berge's conjecture, was recently proven in 
\cite{CRST}. In \cite{FHVT}, we used the SPGT to give an
algebraic method of determining whether a graph $G$ is perfect. 
Here, we look for characterizations of perfect graphs {\it without} using the SPGT,
perhaps yielding an approach that could lead to an algebraic proof of the
SPGT.

Although our motivation comes from a problem in graph theory, the scope of our work extends well beyond our initial purpose. We investigate a number of questions in commutative algebra and combinatorics, including:

\begin{enumerate}
\item (Computing the chromatic number) Is there an algebraic method for computing
$\chi(\HH)$, the chromatic number of $\HH$, for a given hypergraph $\HH$?
\item (Describing associated primes) What are the associated primes of powers of a squarefree monomial ideal? How are the associated primes of the powers of $J(\HH)$
reflected in the coloring properties of $\HH$ and its subhypergraphs?
\item (Stability of associated primes) What are bounds on an integer $a$, for a given
ideal $I$, such that $\bigcup_{s=1}^\infty \Ass(R/I^s)$ stabilizes at $a$, i.e.,
\[ \bigcup_{s=1}^a \Ass(R/I^s) = \bigcup_{s=1}^\infty \Ass(R/I^s)?\]
\item (Characterizing perfect graphs) How can one give an algebraic characterization of perfect
graphs without using the Strong Perfect Graph Theorem?
\item (Persistence of associated primes) Given any ideal $I$, what conditions on $I$
ensure that $\Ass(R/I^s) \subseteq \Ass(R/I^{s+1})$ for all $s \ge 1$?
\item (Saturated chain property for associated primes) An ideal $I \subset R$ has the \textbf{saturated chain
property for associated primes} if given any associated prime $P$ of $R/I$ that is not
minimal, there exists an associated prime $Q \subsetneq P$ with height$(Q)=$ height$(P)-1$.
Identify families of ideals possessing this (relatively rare) property.
\end{enumerate}

Our first main theorem, which answers (1),
is a mechanism for determining $\chi(\HH)$.

\begin{theorem} [Theorem~\ref{colortheorem}]
$\chi(\HH)$ is the
minimal $d$ such that $(x_1\cdots x_n)^{d-1} \in J(\HH)^d$.
\end{theorem}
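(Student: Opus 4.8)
The plan is to translate the monomial membership $(x_1\cdots x_n)^{d-1}\in J(\HH)^d$ directly into the existence of a proper $d$-coloring of $\HH$, using the standard dictionary that a set of vertices is independent (contains no edge) if and only if its complement is a vertex cover. Recall that $J(\HH)$ contains the squarefree monomial $x_W:=\prod_{x_i\in W}x_i$ for \emph{every} vertex cover $W$ of $\HH$ (the minimal ones giving the minimal generators), that a monomial lies in $J(\HH)^d$ precisely when it is divisible by a product $x_{W_1}\cdots x_{W_d}$ of $d$ minimal generators, and that a proper $d$-coloring of $\HH$ is a partition $V_\HH=C_1\sqcup\cdots\sqcup C_d$ into (possibly empty) independent sets, equivalently an assignment of one of $d$ colors to each vertex so that no edge is monochromatic.

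First I would prove that $\chi(\HH)\le d$ implies $(x_1\cdots x_n)^{d-1}\in J(\HH)^d$. Given a proper $d$-coloring with color classes $C_1,\dots,C_d$ (padding with empty classes if $\chi(\HH)<d$), set $W_j:=V_\HH\setminus C_j$; each $W_j$ is a vertex cover because $C_j$ is independent, so $x_{W_j}\in J(\HH)$. Every vertex $x_i$ lies in exactly one class $C_{j(i)}$, hence in $W_j$ for precisely the $d-1$ indices $j\ne j(i)$, so $x_{W_1}\cdots x_{W_d}=\prod_{i=1}^n x_i^{\,d-1}=(x_1\cdots x_n)^{d-1}$, giving the membership. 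In particular the minimal $d$ with $(x_1\cdots x_n)^{d-1}\in J(\HH)^d$ is at most $\chi(\HH)$.

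Next I would prove the converse. If $(x_1\cdots x_n)^{d-1}\in J(\HH)^d$, choose minimal vertex covers $W_1,\dots,W_d$ with $x_{W_1}\cdots x_{W_d}$ dividing $(x_1\cdots x_n)^{d-1}$. Comparing the exponent of each $x_i$ shows that $x_i$ lies in at most $d-1$ of the $W_j$, hence in at least one complement $C_j:=V_\HH\setminus W_j$; assigning $x_i$ such a color $j$ yields a coloring whose $j$-th class sits inside the independent set $C_j$ and is therefore independent, i.e.\ a proper $d$-coloring, so $\chi(\HH)\le d$. Taking $d$ to be the minimal value for which the membership holds then gives $\chi(\HH)\le d$, and together with the previous paragraph we get equality.

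I do not expect a serious obstacle; the proof is essentially the combinatorial translation just described, and the only points needing care are allowing empty color classes so that the first implication works for every $d\ge\chi(\HH)$ (not merely $d=\chi(\HH)$), the routine fact that membership in a power of a monomial ideal is detected by divisibility by a product of generators, and the degenerate case in which $\HH$ has no edges, where $J(\HH)=R$ and both quantities equal $1$.
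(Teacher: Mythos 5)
Your proposal is correct and follows essentially the same argument as the paper: the complement-of-color-class/vertex-cover dictionary in both directions, with only cosmetic differences (you pad with empty classes where the paper multiplies by $\bm_V\in J$ to pass from $\chi(\HH)$ to general $d\ge\chi(\HH)$, and you assign each vertex one admissible color where the paper makes the classes disjoint explicitly).
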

\noindent
An alternative algebraic description of $\chi(\HH)$ was first given by Sturmfels and Sullivant \cite{SS} 
in terms of the $s$-th secant ideal of $I(\HH)$.  When $\HH = G$ is a graph,
Theorem 1.1 can be generalized to compute the $b$-fold chromatic number (see Theorem
\ref{bfold}) of $G$, a number which is related to the fractional chromatic number.

An answer to question (2) is much more subtle.  Roughly speaking,
we will show that the induced subhypergraphs that are critically $(s+1)$-chromatic,
that is, their chromatic number is $s+1$, but any proper induced
subhypergraph has a smaller chromatic number, each contribute an
element to $\Ass(R/J^s)$.
The subtlety to question (2) comes from the
fact that one needs to look for critically $(s+1)$-chromatic induced
subhypergraphs not in $\HH$, but in the {\bf $s$-th expansion} of $\HH$ (see
Definition \ref{expansion}).  By generalizing results of
Sturmfels and Sullivant \cite{SS} and using the generalized Alexander dual $(J^s)^{[\mathbf{s}]}$ of $J^s$, 
where $[\mathbf{s}] = (s, \dots, s)$, our next main theorem and its corollary (Corollary \ref{classificationofass}) give a complete answer to question (2).

\begin{theorem} [Theorem~\ref{assoc-depolar}]
Let $\HH = (V_\HH, \mathcal{E}_\HH)$ be a finite simple hypergraph 
with cover ideal $J = J(\HH)$.
For any $s \ge 1$, we have
\[(J^s)^{[\mathbf{s}]} = (\overline{\bm_T} ~\big|~ \chi(\HH^s_T) > s )\]
where $\HH^s_T$ is the induced subgraph of $\HH^s$, the $s$-th expansion of $\HH$, over a collection of its vertices $T$, and $\overline{\bm_T}$ is the depolarization of $\bm_T$.
\end{theorem}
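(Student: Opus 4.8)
The plan is to reduce the asserted identity to a purely combinatorial statement about $s$-colorings of the expansion $\HH^s$ (Definition~\ref{expansion}), using the explicit combinatorial description of the generalized Alexander dual. First I would recall that for a monomial ideal $K$ all of whose minimal generators have exponent vectors $\preceq [\mathbf{s}]$ (componentwise), $K^{[\mathbf{s}]}$ is the intersection of the ideals $\mathfrak{m}^{[\mathbf{s}]\setminus \mathbf{b}} := (x_i^{\,s-b_i+1} ~:~ b_i \ge 1)$ taken over the minimal generators $x^{\mathbf{b}}$ of $K$; hence a monomial $x^{\mathbf{c}}$ with $\mathbf{c}\preceq[\mathbf{s}]$ lies in $K^{[\mathbf{s}]}$ if and only if for every generator $x^{\mathbf{b}}$ of $K$ there is an index $i$ with $b_i+c_i>s$. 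Now $J = J(\HH)$ is minimally generated by the monomials $\bm_C$ for $C$ a minimal vertex cover of $\HH$, so $J^s$ is generated by the products $\bm_{C_1}\cdots\bm_{C_s}$ over $s$-tuples of minimal vertex covers, each of which has exponent vector $\preceq[\mathbf{s}]$; since the condition ``$\exists i:\ b_i+c_i>s$'' is monotone in $\mathbf{b}$, it suffices to verify it on these generators. Passing to the complementary independent sets $I_k := V_{\HH}\setminus C_k$, which satisfy $|\{k : x_i\in C_k\}| = s - |\{k : x_i\in I_k\}|$, and using monotonicity once more to enlarge the $I_k$ to arbitrary independent sets, we obtain: $x^{\mathbf{c}}\in(J^s)^{[\mathbf{s}]}$ if and only if for every $s$-tuple $(I_1,\dots,I_s)$ of independent sets of $\HH$ there is an index $i$ with $|\{k : x_i\in I_k\}|<c_i$.

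The heart of the proof is then a lemma of pure graph theory: for $T\subseteq V_{\HH^s}$, writing $c_i := |T\cap\{x_{i,1},\dots,x_{i,s}\}|$, one has $\chi(\HH^s_T)\le s$ if and only if there exist independent sets $I_1,\dots,I_s$ of $\HH$ with $|\{k : x_i\in I_k\}|\ge c_i$ for every $i$. For the forward direction, given a proper $s$-coloring $\phi$ of $\HH^s_T$ I would put $I_k = \{x_i ~:~ \phi(x_{i,j}) = k \text{ for some } x_{i,j}\in T\}$: the edges of $\HH^s$ lying over the edges of $\HH$ force every $I_k$ to be independent in $\HH$ (a monochromatic such edge inside $T$ would be a monochromatic edge of $\HH^s_T$), and the clique of $\HH^s$ on the copies $x_{i,1},\dots,x_{i,s}$ forces the $c_i$ copies of $x_i$ in $T$ to receive $c_i$ distinct colors, so $|\{k : x_i\in I_k\}| = c_i$. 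For the converse, given such $I_1,\dots,I_s$ I would, for each $i$, choose $c_i$ distinct colors from $\{k : x_i\in I_k\}$ and assign them to the $c_i$ copies of $x_i$ in $T$; the resulting coloring of $\HH^s_T$ is proper, because a monochromatic edge lying over an edge $E$ of $\HH$ would force $E\subseteq I_k$ for some $k$. Negating this lemma and comparing it with the conclusion of the previous paragraph shows that $\overline{\bm_T} = x^{\mathbf{c}}\in(J^s)^{[\mathbf{s}]}$ precisely when $\chi(\HH^s_T) > s$.

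It remains to package this as an equality of ideals. Every monomial $\overline{\bm_T}$ has exponent vector $\preceq[\mathbf{s}]$, and conversely every monomial $x^{\mathbf{c}}$ with $\mathbf{c}\preceq[\mathbf{s}]$ equals $\overline{\bm_T}$ for a suitable choice of $T$; so $\{\overline{\bm_T} ~:~ \chi(\HH^s_T)>s\}$ is exactly the set of monomials $x^{\mathbf{c}}$ with $\mathbf{c}\preceq[\mathbf{s}]$ that lie in $(J^s)^{[\mathbf{s}]}$. Finally, from the intersection formula each minimal generator of $(J^s)^{[\mathbf{s}]}$ is a least common multiple of pure powers $x_i^{\,s-b_i+1}$ with $1\le b_i\le s$, hence has exponent vector $\preceq[\mathbf{s}]$; thus these monomials generate $(J^s)^{[\mathbf{s}]}$, and the theorem follows.

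I expect the main obstacle to be the lemma, and specifically the inequality $|\{k : x_i\in I_k\}|\ge c_i$: it is precisely the clique on the copies of each vertex in $\HH^s$ that makes this hold, since without it one recovers only $|\{k : x_i\in I_k\}|\le c_i$ and the asserted equality fails. A smaller but genuine point is justifying that both the generalized Alexander dual and the membership test derived from it may be computed from the natural generating set $\{\bm_{C_1}\cdots\bm_{C_s}\}$ of $J^s$ rather than from its minimal generators; this rests on the monotonicity of ``$\exists i:\ b_i+c_i>s$'' in $\mathbf{b}$ together with the independence of $K^{[\mathbf{s}]}$ from the chosen generating set of $K$.
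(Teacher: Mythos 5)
Your argument is correct, and it takes a genuinely different route from the paper's. The paper proves the theorem by passing through the secant ideal of the expanded hypergraph: it invokes the Sturmfels--Sullivant description $I(\HH^s)^{\{s\}} = (\bm_T \mid \chi(\HH^s_T)>s)$ (Theorem~\ref{ss-maintheorem}) together with the standard-monomial description of joins (Theorem~\ref{standardmon}), and then shows that the depolarization of $I(\HH^s)^{\{s\}}$ equals $(J^s)^{[\mathbf{s}]}$ by matching standard monomials of the secant with monomials outside the dual. You bypass secants and joins entirely: you extract a clean membership test for $(J^s)^{[\mathbf{s}]}$ directly from the irreducible-intersection formula (correctly handling the passage from the non-minimal generating set $\{\bm_{C_1}\cdots\bm_{C_s}\}$ via monotonicity of the condition $\exists i:\, b_i+c_i>s$, and from minimal vertex covers to arbitrary independent sets by enlargement), and you isolate the combinatorial heart as a standalone lemma: $\chi(\HH^s_T)\le s$ iff there are independent sets $I_1,\dots,I_s$ of $\HH$ whose coverage counts dominate the shadow counts $c_i$ of $T$. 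The underlying translation (tuples of covers $\leftrightarrow$ tuples of independent sets $\leftrightarrow$ colorings of $\HH^s_T$, with the shadow cliques forcing distinct colors on copies of a vertex) is the same as in the paper's proof, but your packaging is self-contained and more elementary, and you correctly identify the role of the cliques on shadows as the point where equality of counts is secured; your final reduction to ideal equality via the observation that all minimal generators of $(J^s)^{[\mathbf{s}]}$ have exponents bounded by $\mathbf{s}$ is also sound. What the paper's route buys instead is the explicit link to $I(\HH^s)^{\{s\}}$, whose "minimal generators correspond to critically $(s+1)$-chromatic subhypergraphs" clause is then reused for Corollary~\ref{classificationofass}; your proof establishes the stated ideal identity but would need a small additional argument to recover that refinement.
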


A similar construction to the
expansion of $\HH$, the parallelization of $\HH$, appears in recent work
of Mart\'{i}nez-Bernal, Renter\'{i}a, and Villarreal \cite{MRV}. 
These constructions suggest that
when studying powers of edge or cover ideals of $\HH$, information
about these ideals is not only encoded into $\HH$, but also into
the expansion or parallelization of $\HH$.

Question (3) was inspired by results of Brodmann \cite{Brodmann}
who showed that for any ideal $J$ in a Noetherian ring, there exists an
integer $a$ such that
$$\bigcup_{s=1}^\infty \Ass(R/J^s) = \bigcup_{s=1}^a \Ass(R/J^s).$$
Yet little is known about the place at which stabilization occurs (i.e., the minimal such integer $a$). Hoa \cite{Hoa}
provided a rough bound on $a$ when $J$ is a monomial ideal in a polynomial ring. When $J = J(\HH)$
is the cover ideal of a hypergraph, we give a lower bound for $a$ in terms of $\chi(\HH)$ in 
Corollary \ref{stablebound}, thus giving a partial
answer to (3).

Sturmfels and Sullivant \cite{SS} gave one answer to question (4)
by classifying perfect graphs in terms of the generators
of the $s$-th secant ideal of $I(G)$.
We provide new algebraic characterizations, 
independent of the SPGT, thus giving new answers to (4):

\begin{theorem} [Theorem ~\ref{perfectgraphs}]
Let $G$ be a simple graph with cover ideal $J$. The following are equivalent:
\begin{enumerate}
\item $G$ is perfect.
\item For all $s \ge 1$, $P=(x_{i_1}, \dots, x_{i_r}) \in \Ass(R/J^s)$ if and only if
the induced graph on
$\{x_{i_1},\dots,x_{i_r}\}$ is a clique of size $1< r \le s+1$ in $G$.
\item $J^s$  has the saturated chain condition for associated primes for all $s \geq 1$.
\end{enumerate}
\end{theorem}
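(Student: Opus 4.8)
The plan is to prove the cycle of implications $(1)\Rightarrow(2)\Rightarrow(3)\Rightarrow(1)$. The engine throughout is Corollary~\ref{classificationofass} (obtained from Theorem~\ref{assoc-depolar}): writing $P_W := (x_i \mid x_i\in W)$ for a set $W$ of vertices, it says that $P_W\in\Ass(R/J^s)$ precisely when $P_W=\sqrt{(\overline{\bm_T})}$ for some induced subgraph $G^s_T$ of the $s$-th expansion $G^s$ that is critically $(s+1)$-chromatic. Everything needed from perfect graph theory will be supplied by two classical facts that predate and are logically independent of the SPGT: Lov\'asz's Replication Lemma (equivalently, that substituting a perfect graph into a vertex of a perfect graph again yields a perfect graph), and a few elementary observations about a \emph{minimally imperfect} graph $H$, i.e.\ one that is imperfect while every proper induced subgraph of it is perfect.

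For $(1)\Rightarrow(2)$: by Definition~\ref{expansion} the expansion $G^s$ is built from $G$ by repeatedly replicating vertices, so if $G$ is perfect then Lov\'asz's Replication Lemma makes every $G^s$ perfect. In a perfect graph $\chi=\omega$ holds on every induced subgraph, so a critically $(s+1)$-chromatic induced subgraph $T$ of $G^s$ is forced to be the complete graph $K_{s+1}$; note $T$ cannot lie inside a single vertex-group of $G^s$ (the set of copies of one vertex of $G$, which has only $s$ vertices), so it meets at least two groups. Its $s+1$ vertices are copies of a set $W$ of between $2$ and $s+1$ vertices of $G$, and those vertices are pairwise adjacent in $G$ because their copies are adjacent in $G^s$; thus $G[W]$ is a clique with $2\le|W|\le s+1$ and $\sqrt{(\overline{\bm_T})}=P_W$. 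Conversely, every clique $W$ of $G$ with $2\le|W|\le s+1$ arises this way: spread $s+1$ pairwise-adjacent copies of the vertices of $W$ over the $|W|$ vertex-groups of $G^s$ (possible since $|W|\le s+1$), obtaining an induced $K_{s+1}$ whose depolarization has radical $P_W$. By Corollary~\ref{classificationofass} this is exactly statement~(2).

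For $(2)\Rightarrow(3)$: statement~(2) makes $\Ass(R/J^s)=\{P_W\mid G[W]\text{ a clique},\ 2\le|W|\le s+1\}$ fully explicit. The minimal primes of $J^s$ are those of $\sqrt{J^s}=J$, namely the height-two primes $(x_i,x_j)$ for $\{x_i,x_j\}$ an edge of $G$; hence the non-minimal associated primes are precisely the $P_W$ with $|W|\ge 3$. For such a $P_W$, removing any vertex of the clique $W$ yields a clique $W'$ with $|W'|=|W|-1\in\{2,\dots,s\}$, so $P_{W'}\in\Ass(R/J^s)$ by~(2), $P_{W'}\subsetneq P_W$, and $\height(P_{W'})=\height(P_W)-1$. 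Thus $J^s$ has the saturated chain condition for every $s\ge 1$. For $(3)\Rightarrow(1)$ we argue the contrapositive. If $G$ is not perfect, choose a minimally imperfect induced subgraph $H=G[W_0]$. For any vertex $v$ the graph $H-v$ is perfect, so $\chi(H)\le\chi(H-v)+1=\omega(H-v)+1\le\omega(H)+1$; with $\chi(H)>\omega(H)$ this gives $\chi(H)=\omega(H)+1$ and shows $H$ is vertex-critical, and also that $H-v$ is never a clique (otherwise $H$ would be a complete graph with one extra vertex attached, which is perfect). Set $s=\omega(H)$. Then $H$ is a critically $(s+1)$-chromatic induced subgraph of $G$, so $P:=P_{W_0}\in\Ass(R/J^s)$ by Corollary~\ref{classificationofass}, and $P$ is not minimal (its height $|W_0|$ exceeds $2$). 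To defeat the saturated chain condition it suffices that no associated prime of $R/J^s$ of height $|W_0|-1$ lie strictly below $P$: such a prime must be $P_{W_0\setminus v}$ for some $v\in W_0$, but by the standard localization of $\Ass$ for monomial ideals, $P_{W_0\setminus v}\in\Ass(R/J^s)$ would force $(x_i\mid x_i\in W_0\setminus v)\in\Ass(k[x_i\mid x_i\in W_0\setminus v]/J(H-v)^s)$, and applying the already-proved implication $(1)\Rightarrow(2)$ to the perfect graph $H-v$ would then require $H-v$ to be a clique --- a contradiction. Hence $J^{\omega(H)}$ fails the saturated chain condition, completing the cycle.

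The main obstacle has two parts. The first is the combinatorial bookkeeping needed to run Corollary~\ref{classificationofass} in the graph case: one must understand the expansion $G^s$ and the depolarization operation well enough to pin down exactly which monomials $\overline{\bm_T}$ occur, the load-bearing point being the implication ``$G$ perfect $\Rightarrow$ every critically $(s+1)$-chromatic induced subgraph of $G^s$ is complete.'' The second --- and the one that really matters for the statement --- is checking that every appeal to perfect graph theory (the Replication Lemma, and the small facts about minimally imperfect graphs) is elementary and SPGT-free, so that the resulting characterizations are genuinely independent of the Strong Perfect Graph Theorem.
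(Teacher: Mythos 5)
Your proposal is correct, and for the hard implication it takes a genuinely different route from the paper. Your $(1)\Rightarrow(2)$ (expansion of a perfect graph is perfect via the Replication Lemma, so a critically $(s+1)$-chromatic induced subgraph of $G^s$ must be $K_{s+1}$, whose depolarized support is a clique of $G$) and your $(2)\Rightarrow$ saturated chain (delete a vertex of the clique and invoke the ``if'' direction of (2)) are essentially the paper's arguments for $(1)\Rightarrow(2)$ and $(2)\Rightarrow(4)$. The divergence is in proving that the saturated chain condition forces perfection. The paper localizes at a minimally imperfect subgraph, puts the maximal ideal in $\Ass(R/J^{\chi(G)-1})$, uses the saturated chain property to build a chain of associated primes up from an edge, locates the first step where the induced graph stops being a clique, and derives a contradiction from Theorem~\ref{simplicialvertex}, a separate and rather delicate result about simplicial vertices proved by a monomial-swapping argument. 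You instead exhibit the failure of the condition directly: with $H=G_{W_0}$ minimally imperfect and $s=\omega(H)$, the prime on $W_0$ is an embedded associated prime of $R/J^s$ (criticality plus Corollary~\ref{assprimesmingraphs}), any monomial prime strictly below it of coheight one is supported on $W_0\setminus\{v\}$, and since $H-v$ is perfect but not a clique, Lemma~\ref{localize} together with the already-established implication $(1)\Rightarrow(2)$ (applied to $H-v$, with no circularity) shows that prime is not associated. This bypasses Theorem~\ref{simplicialvertex} entirely, at the cost of not obtaining that independently interesting theorem; your supporting graph-theoretic facts (for minimally imperfect $H$: $\chi(H)=\omega(H)+1$, vertex-criticality, and that $H-v$ is never a clique since a clique plus one attached vertex is perfect) are elementary and SPGT-free, so the independence claim survives. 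The only cosmetic inaccuracy is describing Corollary~\ref{classificationofass} via $\sqrt{(\overline{\bm_T})}$; what is meant, and what you in fact use, is that the support of the depolarization $\overline{\bm_T}$ is exactly the vertex set of $P$, which is the corollary's actual content.
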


As a consequence of Theorem~\ref{perfectgraphs}, we prove that if $J$ is the cover 
ideal of a perfect graph, then $\Ass(R/J^s) \subseteq \Ass(R/J^{s+1})$ for all 
$s \ge 1$, giving us a condition for question (5). We also note in 
Corollary~\ref{assprimescliques} that primes corresponding to cliques, odd holes, 
and odd antiholes always persist, regardless of whether the underlying graph 
is perfect, giving a partial answer to (5).
Observe that we have also identified a new infinite family of ideals which
have the saturated chain condition for associated primes, answering (6).  
Very few other families of ideals with this  property are known (cf. \cite{HostenThomas}). 

\noindent {\bf Acknowledgments.} This project began during a Research in Teams
week at Banff International Research Station. The genesis of our main results was a
computer experiments using CoCoA \cite{C} and {\it Macaulay 2} \cite{M2}.  We 
thank Hailong Dao, Ch\'inh Ho\`ang, Craig Huneke, Jeremy Martin,
and Jeffrey Mermin for their helpful conversations, and  Bjarne Toft and  Anders Sune Pedersen 
for information about critically chromatic graphs.
In addition, we thank an anonymous
referee of a previous paper who suggested the idea of the $s$-th expansion of a graph.
The first author is partially
supported by an NSA Young Investigator's Grant and an Oklahoma State University Dean's
Incentive Grant. The second author is partially supported by the Board of Regents grant
LEQSF(2007-10)-RD-A-30 and Tulane's Research Enhancement Fund. The third author was supported
by NSERC.


\section{Preliminaries: some hypergraph theory and associated primes} \label{s.prelim}

We explain some terminology that we shall use in the rest of the paper.

\subsection{Cover ideals, secant ideals and Alexander duality}

Throughout, $\HH$ is a finite simple hypergraph on the vertex set $V_\HH =\{x_1,\dots,x_n\}$ and
with the edge set $\mathcal{E}_\HH = \{E_1,\dots,E_t\}$. We assume that $\HH$ has no isolated vertices
and that each $|E_i| \ge 2$. When the $E_i$s all have cardinality two, then $\HH$ is a
finite simple graph. In this case, we use $G$ in place of $\HH$. We shall use $I=I(\HH)$
for the edge ideal of $\HH$ and $J=J(\HH)$ for the cover ideal of $\HH$, which is generated
 by the squarefree monomials corresponding to the minimal vertex covers of $\HH$.
A \textbf{vertex cover} of $\HH$ is a subset $W$ of $V_{\HH}$ such that if
$E \in \mathcal{E}_{\HH}$, then $W \cap E \not = \emptyset$. 
A vertex cover is \textbf{minimal} if
no proper subset is also a vertex cover.

\begin{notation}\label{notation}
We shall use the following notation throughout this paper.  For any subset $W \subseteq V$
of the vertices, we use $\bm_W$ to denote the monomial $\prod_{x \in W} x$, and we use $x_W$ to
denote the ideal $(x ~|~ x \in W)$. Hence $\bm_V$ denotes $x_1\cdots x_n$, and $x_V$ denotes
the maximal homogeneous ideal in $R$. Moreover, if $\mathbf{a} = (a_1,\dots,a_n) \in \mathbb{N}^n$,
we write $\mathbf{x}^\mathbf{a}$ for $x_1^{a_1} \cdots x_n^{a_n}$.
\end{notation}

The ideals $I=I(\HH)$ and $J=J(\HH)$ are squarefree Alexander duals of each other. Later, we
shall need the generalized Alexander duality for arbitrary monomial ideals. The best reference for
this topic is Miller and Sturmfels's book \cite{MS}. Let $\mathbf{a}$ and $\mathbf{b}$ be
vectors in $\mathbb{N}^n$ such that $b_i \le a_i$ for each $i$. As in \cite[Definition 5.20]{MS},
we define the vector $\mathbf{a \setminus b}$ to be the vector whose $i$-th entry is given by
\[
a_i \setminus b_i = \left\{
\begin{array}{l l}
 a_i+1-b_i & \text{if } b_i \ge 1 \\
 0 & \text{if } b_i=0. \\
\end{array}
\right.
\]

\begin{definition} \label{generalalexanderdual}
Let $\mathbf{a} \in \mathbb{N}^n$, and let $I$ be a monomial ideal such that all elements of
$\mathcal{G}(I)$, the minimal generators of $I$, divide $\mathbf{x}^\mathbf{a}$. The
\textbf{Alexander dual} of $I$ \textbf{with respect to} $\mathbf{a}$ is the ideal
\[ I^{[\mathbf{a}]} = \bigcap_{\mathbf{x}^\mathbf{b} \in \mathcal{G}(I)} \, (x_1^{a_1 \setminus b_1}, \dots, x_n^{a_n \setminus b_n}).\]
\end{definition}

\begin{definition}
A monomial ideal in $R = k[x_1,\ldots,x_n]$ of the form $\mf^{{\bf b}} = (x_i^{b_i} ~|~ b_i \geq 1)$ with ${\bf b} = (b_1, \dots, b_n) \in \N^n$ is called an {\bf irreducible ideal}.  An {\bf irreducible decomposition} of a monomial ideal $I$ is an expression of the form
\[ I = \mf^{{\bf b}_1} \cap \cdots \cap \mf^{{\bf b}_r}~~\mbox{for some vectors ${\bf b}_1,\ldots,{\bf b}_r \in \N^n$.} \]
The decomposition
is {\bf irredundant} if none of the $\mf^{{\bf b}_i}$ can be omitted.
\end{definition}

The ideals $x_W$ introduced in Notation \ref{notation} are examples
of irreducible ideals;  in particular, $x_W = {\bf m}^{{\bf b}}$,
where $b_i = 1$ if $x_i \in W$, and 0 otherwise.
There is a bijection between the irredundant
irreducible decomposition of a monomial ideal $I$ and the generators of $I^{[{\bf a}]}$,
the Alexander dual of $I$ with respect to ${\bf a}$ (see \cite[Theorem 5.27]{MS}).

\begin{theorem}  \label{correspondence}
Let $I$ be a monomial ideal whose minimal generators all divide
${\bf x}^{\bf a}$.  Then $I$ has a unique irredundant irreducible decomposition, which is
given by
\[I = \bigcap_{{\bf x}^{{\bf a} \setminus {\bf b}} ~\mbox{is a minimal generator
of $I^{[{\bf a}]}$}} \mf^{{\bf b}}.\]
\end{theorem}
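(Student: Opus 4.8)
The plan is to deduce Theorem~\ref{correspondence} from the general Alexander duality theory of Miller and Sturmfels, specifically from \cite[Theorem 5.27]{MS}, which states that Alexander duality with respect to $\mathbf{a}$ is an involution ($(I^{[\mathbf{a}]})^{[\mathbf{a}]} = I$) and that it interchanges minimal generators with irreducible components. First I would unwind Definition~\ref{generalalexanderdual}: by construction $I^{[\mathbf{a}]}$ is written as an intersection $\bigcap_{\mathbf{x}^\mathbf{b} \in \mathcal{G}(I)} \mf^{\mathbf{a} \setminus \mathbf{b}}$, so its minimal generators are, up to the combinatorics of which components are redundant, in bijection with $\mathcal{G}(I)$; applying the duality a second time and using involutivity then recovers $I$ as an intersection of irreducible ideals indexed by $\mathcal{G}(I^{[\mathbf{a}]})$. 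The key point is that the operation $\mathbf{b} \mapsto \mathbf{a} \setminus \mathbf{b}$ is an involution on the relevant set of exponent vectors (those $\mathbf{b}$ with $\mathbf{x}^\mathbf{b} \mid \mathbf{x}^\mathbf{a}$), so that $\mathbf{x}^{\mathbf{a}\setminus(\mathbf{a}\setminus\mathbf{b})} = \mathbf{x}^\mathbf{b}$ whenever $\mathbf{x}^\mathbf{b}\mid \mathbf{x}^\mathbf{a}$; writing $\mathbf{b}' = \mathbf{a}\setminus\mathbf{b}$, the generator $\mathbf{x}^{\mathbf{a}\setminus\mathbf{b}'} = \mathbf{x}^\mathbf{b}$ of $(I^{[\mathbf{a}]})^{[\mathbf{a}]} = I$ corresponds precisely to the irreducible component $\mf^{\mathbf{b}'}$ of $I$, which is the asserted formula.

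The steps, in order, are: (i) recall from \cite[Theorem 5.27]{MS} that $(-)^{[\mathbf{a}]}$ is an involution on monomial ideals whose generators divide $\mathbf{x}^\mathbf{a}$, and that it sends the set of minimal generators of $I$ bijectively onto the set of irreducible components of $I^{[\mathbf{a}]}$ (equivalently, onto $\mathcal{G}$ of the Alexander dual) --- this already gives the irreducible decomposition and, combined with uniqueness of a minimal generating set of a monomial ideal, its \emph{irredundancy}; (ii) apply this with $I^{[\mathbf{a}]}$ in place of $I$, noting that the hypothesis ``all generators of $I^{[\mathbf{a}]}$ divide $\mathbf{x}^\mathbf{a}$'' holds because each $a_i \setminus b_i \le a_i$ when $b_i \ge 1$ and equals $0$ otherwise; (iii) identify the resulting components: a minimal generator of $I^{[\mathbf{a}]}$ can be written uniquely as $\mathbf{x}^{\mathbf{a}\setminus\mathbf{b}}$ for a suitable $\mathbf{b}$ (this is just a change of labels, using the involutivity of $\mathbf{b} \mapsto \mathbf{a}\setminus\mathbf{b}$ on admissible vectors), and the corresponding irreducible component of $(I^{[\mathbf{a}]})^{[\mathbf{a}]} = I$ is exactly $\mf^{\mathbf{b}}$ by Definition~\ref{generalalexanderdual}; (iv) conclude uniqueness of the irredundant irreducible decomposition from the uniqueness of the minimal generating set of $I^{[\mathbf{a}]}$ under the established bijection.

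The main obstacle --- really the only subtle point --- is the bookkeeping in step (iii): one must be careful that the labeling ``$\mathbf{x}^{\mathbf{a}\setminus\mathbf{b}}$ is a minimal generator of $I^{[\mathbf{a}]}$'' is unambiguous, i.e.\ that the map $\mathbf{b} \mapsto \mathbf{a}\setminus\mathbf{b}$ restricted to exponent vectors dividing $\mathbf{x}^\mathbf{a}$ and supported exactly where $\mathbf{a}$ is supported is a genuine involution, so that ``$\mathbf{b}$'' is recovered as $\mathbf{a}\setminus(\text{that generator})$. A short check of the two cases in the definition of $a_i \setminus b_i$ (for $b_i \ge 1$: $a_i \setminus (a_i \setminus b_i) = a_i + 1 - (a_i + 1 - b_i) = b_i$; for $b_i = 0$ both sides are $0$) settles this, and once the involution is in hand everything else is a direct translation of \cite[Theorem 5.27]{MS}. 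I would also remark, for the reader, that uniqueness here refers to the set of irreducible components with their exponent vectors, not to any ordering, and that redundant components on the dual side correspond to non-minimal generators on $I$'s side, which is why passing to the \emph{minimal} generators of $I^{[\mathbf{a}]}$ yields exactly the \emph{irredundant} decomposition.
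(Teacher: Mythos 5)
Your proposal is correct and follows the same route as the paper, which offers no independent proof but simply cites \cite[Theorem 5.27]{MS}; your argument is just a careful unwinding of that citation via the involutivity of $(-)^{[\mathbf{a}]}$ and the check that $a_i \setminus (a_i \setminus b_i) = b_i$. Nothing further is needed.
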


Our sources for joins and secant ideals are the papers
\cite{SU,SS}.

\begin{definition} Let $I_1, \dots, I_s$ be ideals in $R = k[\mathbf{x}]$, where
$\mathbf{x} = \{x_1, \dots, x_n\}$. Introduce $s$ new groups of variables
$\mathbf{y}_i = \{y_{i1}, \dots, y_{in}\}$ for $i=1,\ldots,s$, and let $I_i(\mathbf{y}_i)$ be the image of
$I_i$ in $k[\mathbf{x}, \mathbf{y}_1, \dots, \mathbf{y}_s]$ under the map that sends $x_j$
to $y_{ij}$. The \textbf{join} of $I_1, \dots, I_s$, denoted by $I_1 * \cdots * I_s$, is
defined to be the elimination ideal
$$\Big( I_1(\mathbf{y}_1) + \dots + I_s(\mathbf{y}_s) + (y_{1j} + \dots + y_{sj} - x_j ~|~ j = 1, \dots, n)
 \Big) \cap R.$$
The \textbf{$s$-th secant ideal} of an ideal $I \subseteq R$ is the $s$-fold join of $I$
with itself, that is,
$$I^{\{s\}} = I * \cdots * I.$$
\end{definition}

For a monomial ideal $I \subseteq k[\mathbf{x}]$, the \textbf{standard monomials} of $I$
are the monomials in $k[\mathbf{x}] \backslash I$. The following result of \cite{SS} will be used later.

\begin{theorem}[Proposition 2.4 of \cite{SS}] \label{standardmon}
Let $I_1, \dots, I_s$ be squarefree monomial ideals in $k[\mathbf{x}]$. 
Then $I_1 * \cdots * I_s$ is
also a monomial ideal in $k[\mathbf{x}]$, whose standard monomials
are precisely the products $m_1 \cdots m_s$, where
$m_i$ is a standard monomial of $I_i$.  This result is
independent of the characteristic of $k$.
\end{theorem}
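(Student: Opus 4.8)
The plan is to realize the join as the preimage of a monomial ideal under a linear substitution, deduce that it is a monomial ideal by a multigrading argument, and then read off its standard monomials by a direct expansion; the squarefree hypothesis will enter precisely to neutralize the effect of positive characteristic.

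First I would rewrite the join. In $S = k[\mathbf{x},\mathbf{y}_1,\dots,\mathbf{y}_s]$, the linear ideal $L = (y_{1j}+\dots+y_{sj}-x_j \mid j=1,\dots,n)$ satisfies $S/L \cong T := k[\mathbf{y}_1,\dots,\mathbf{y}_s]$, the isomorphism being induced by $x_j \mapsto y_{1j}+\dots+y_{sj}$. Composing $R \hookrightarrow S \twoheadrightarrow S/L$ gives a ring map $\phi\colon R \to T$ with $\phi(x_j) = y_{1j}+\dots+y_{sj}$, and the image of $\sum_i I_i(\mathbf{y}_i)+L$ in $S/L$ is $\mathcal{I} := I_1(\mathbf{y}_1)+\dots+I_s(\mathbf{y}_s)$. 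Since $\sum_i I_i(\mathbf{y}_i)+L$ contains $L$, intersecting with $R$ gives $I_1*\cdots*I_s = \phi^{-1}(\mathcal{I})$. Next, giving $T$ the $\mathbb{Z}^n$-grading with $\deg y_{ij} = e_j$ for all $i$ makes $\phi$ homogeneous for the standard $\mathbb{Z}^n$-grading on $R$, and $\mathcal{I}$, being a monomial ideal, is $\mathbb{Z}^n$-graded; hence $\phi^{-1}(\mathcal{I})$ is a $\mathbb{Z}^n$-graded ideal of $R$, and therefore a monomial ideal, since each graded component of $R$ is spanned by a single monomial.

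Now comes the monomial analysis. Expanding, $\phi(\mathbf{x}^{\mathbf{a}}) = \prod_{j=1}^n (y_{1j}+\dots+y_{sj})^{a_j}$, and every monomial $\prod_{i,j} y_{ij}^{b_{ij}}$ occurring in this product (with any coefficient) records a decomposition $\mathbf{a} = \mathbf{b}_1+\dots+\mathbf{b}_s$ in $\mathbb{N}^n$, while such a monomial lies in the monomial ideal $\mathcal{I}$ if and only if $\mathbf{x}^{\mathbf{b}_i}\in I_i$ for some $i$. Hence $\mathbf{x}^{\mathbf{a}} \notin I_1*\cdots*I_s$ if and only if $\phi(\mathbf{x}^{\mathbf{a}})$ has a monomial with nonzero coefficient that avoids $\mathcal{I}$. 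If $\mathbf{x}^{\mathbf{a}}$ is standard for the join, picking such a monomial exhibits a decomposition $\mathbf{a} = \sum_i \mathbf{b}_i$ with all $\mathbf{x}^{\mathbf{b}_i}\notin I_i$, so $\mathbf{x}^{\mathbf{a}}$ is a product of standard monomials; this half uses neither squarefreeness nor the characteristic. Conversely, suppose $\mathbf{a} = \sum_i \mathbf{b}_i$ with each $\mathbf{x}^{\mathbf{b}_i}\notin I_i$. For each $j$ with $a_j>0$ choose $i(j)$ with $b_{i(j),j}>0$ and consider the ``pure'' monomial $\mu = \prod_{j:\,a_j>0} y_{i(j),j}^{a_j}$: its coefficient in $\phi(\mathbf{x}^{\mathbf{a}})$ equals $1$, independently of the characteristic of $k$, and the decomposition it records has $i$-th part supported on a subset of $\mathrm{supp}(\mathbf{b}_i)$. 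Here is where squarefreeness of $I_i$ is used: membership of a monomial in a squarefree monomial ideal depends only on its support and is preserved when the support shrinks, so $\mathbf{x}^{\mathbf{b}_i}\notin I_i$ forces the $i$-th part of $\mu$'s decomposition to avoid $I_i$ as well. Thus $\mu\notin\mathcal{I}$, whence $\phi(\mathbf{x}^{\mathbf{a}})\notin\mathcal{I}$ and $\mathbf{x}^{\mathbf{a}}$ is standard. Combining the two directions gives the asserted description of the standard monomials, and since nothing in the argument depends on the characteristic, neither does the conclusion.

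The main obstacle is exactly the behaviour in characteristic $p$: a priori the multinomial coefficients in the expansion of $\phi(\mathbf{x}^{\mathbf{a}})$ may vanish mod $p$, which alters which monomials survive and would indeed break the statement for non-squarefree $I_i$. The device of concentrating all of the $x_j$-mass onto a single $y_{i(j),j}$ manufactures a surviving monomial with coefficient precisely $1$, and it is the squarefree hypothesis that guarantees this shrunk decomposition is still assembled from standard monomials of the $I_i$. (One could alternatively first treat $s=2$ and then induct using associativity of the join, but the direct argument above handles all $s$ at once.)
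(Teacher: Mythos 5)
Your argument is correct, but note that the paper itself gives no proof of this statement: it is quoted verbatim as Proposition 2.4 of Sturmfels--Sullivant \cite{SS}, so there is no in-paper proof to compare against. What you have written is a legitimate self-contained proof along the natural lines: you realize the join as the contraction $\phi^{-1}(\mathcal{I})$ under the substitution $x_j \mapsto y_{1j}+\dots+y_{sj}$ (correct, since the ideal being intersected with $R$ contains the linear ideal $L$), deduce monomiality from the $\mathbb{Z}^n$-grading with $\deg y_{ij}=e_j$, and then characterize standard monomials by expanding $\phi(\mathbf{x}^{\mathbf{a}})$. The two delicate points are handled properly: the ``only if'' direction needs neither squarefreeness nor any characteristic assumption, and in the ``if'' direction your device of concentrating the full exponent $a_j$ on a single variable $y_{i(j),j}$ produces a monomial whose coefficient is exactly $1$, neutralizing the possible vanishing of multinomial coefficients modulo $p$; squarefreeness then enters exactly where it must, namely that if $\mathbf{x}^{\mathbf{b}_i}\notin I_i$ and $\mathrm{supp}(\mathbf{c}_i)\subseteq\mathrm{supp}(\mathbf{b}_i)$ then $\mathbf{x}^{\mathbf{c}_i}\notin I_i$. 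One small wording slip: ``membership \dots is preserved when the support shrinks'' is literally false (it is \emph{non}-membership that is inherited under shrinking the support, equivalently membership under enlarging it); the conclusion you draw in that sentence is the correct one, so this is cosmetic. With that phrasing fixed, your proof stands on its own and makes transparent both why the hypothesis ``squarefree'' is needed in positive characteristic and why the conclusion is characteristic-independent, which the paper only asserts by citation.
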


\subsection{Terminology from (hyper)graph theory}

We shall frequently have occasion to investigate particular types of subhypergraphs.
If $P \subseteq V$, then the {\bf induced subhypergraph} of $\HH$ on $P$, denoted by $\HH_P$,
 is the hypergraph with vertex set $P$ and edge set $\mathcal{E}_{\HH_P} = \{ E \in \mathcal{E}_\HH ~|~ E \subseteq P\}$. 

When we specialize to graphs, we use some additional terminology.
A \textbf{cycle} in a simple graph $G$ is an alternating sequence of distinct vertices
and edges that we write as $C = x_{i_1}e_1x_{i_2}e_2 \cdots x_{i_{n-1}}e_{n-1}x_{i_n}e_nx_{i_1}$ in which the edge
$e_j$ connects the vertices $x_{i_j}$ and $x_{i_{j+1}}$ ($x_{i_{n+1}} = x_{i_1}$) for all $j$.
In this case,  we call $C$ an {\bf $n$-cycle}. We 
write a cycle simply as $x_{i_1}x_{i_2} \cdots x_{i_n}x_{i_1}$, omitting the edges.
A {\bf chord} is an edge that joins two nonadjacent vertices in the cycle. We 
use $C_n$ to denote an $n$-cycle without any chords, and call $C_n$ an
{\bf induced cycle} since the induced graph on $\{x_{i_1},x_{i_2}, \dots, x_{i_n}\}$ contains
only the edges and vertices in the cycle. If an induced cycle has an odd (resp., even)
number of vertices, it is called an {\bf odd} (resp., {\bf even}) {\bf cycle}.  An
odd induced cycle of length at least five is called an {\bf odd hole}.

\begin{definition} \label{chi}
Let $\HH = (V, \mathcal{E})$ be a hypergraph. A {\bf $d$-coloring} of $\HH$ is any
partition of $V = C_1 \cup \cdots \cup C_d$ into $d$ disjoint sets such that for every
$E \in \mathcal{E}$, we have $E \not\subseteq C_i$ for all $i = 1,\dots,d$. (In the case
of a graph $G$, this simply means that any two vertices connected by an edge receive
different colors.) The $C_i$s are called the {\bf color classes}. Each color class $C_i$
is an {\bf independent set}, meaning that $C_i$ does not contain any edge of the hypergraph. 
The {\bf chromatic number} of $\HH$, denoted $\chi(\HH)$,
is the minimal $d$ such that $\HH$ has a $d$-coloring.
\end{definition}

The following definition (for the case of graphs) is found in Diestel \cite[page 134]{D}.

\begin{definition}
A hypergraph $\HH$ is called {\bf critically $d$-chromatic} if $\chi(\HH) = d$,
but for every vertex $x \in V$, $\chi(\HH \setminus \{x\}) < d$, where $\HH \setminus \{x\}$
denotes the hypergraph $\HH$ with $x$ and all edges containing $x$ removed.
\end{definition}

\begin{example} Let $G = C_n$ be any odd cycle (that is, $n \geq 3$ is odd).
Then $G$ is a critically 3-chromatic graph since $\chi(G) = 3$, but
if we remove any vertex $x \in V$, $\chi(G\setminus \{x\}) = 2$.
\end{example}

\subsection{Associated Primes}

Associated primes are a classical notion:

\begin{definition} \label{d.assocprime} Let $M$ be an $R$-module.
 A prime ideal $P$ is called an {\bf associated prime} of $M$ if $P=\Ann(m)$,
the annihilator of $m$, for some $m \in M$. The set of all associated primes of $M$
is denoted $\Ass(M)$.
\end{definition}

Throughout this paper, we will only be considering the associated primes
of monomial ideals.
Hence, any associated prime of $R/J^d$ will
have the form $P = (x_{i_1},\dots,x_{i_r})$ for some subset $\{x_{i_1},\dots,x_{i_r}\} \subseteq V_\HH$.
 We shall slightly abuse notation in the following way:  we shall use $P$ to denote both
a subset $\{x_{i_1},\dots,x_{i_r}\}$ of $V$ and the prime monomial ideal $(x_{i_1},\dots,x_{i_r})$.
We hope it is clear from the context which form we are using.

We end this section with a technical lemma that allows us to simplify our future arguments.
To determine if a prime ideal is associated to $J(\HH)^d$,
it is enough to determine if the maximal ideal is associated to a power of the cover ideal of an
induced subhypergraph.

\begin{lemma}\label{localize}
Let $\HH$ be a finite simple hypergraph on $V = \{x_1,\dots,x_n\}$ with cover ideal
$J(\HH) \subseteq R = k[x_1,\dots,x_n]$.  Then
\[P = (x_{i_1},\dots,x_{i_r}) \in \Ass(R/J(\HH)^d) \Leftrightarrow P = (x_{i_1},\dots,x_{i_r}) \in \Ass(k[P]/J(\HH_P)^d),\] where $k[P] = k[x_{i_1},\dots,x_{i_r}]$, and $\HH_P$ is the induced hypergraph
of $\HH$ on the vertex set $P = \{x_{i_1},\dots,x_{i_r}\} \subseteq V$.
\end{lemma}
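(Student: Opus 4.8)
The plan is to reduce the question of whether $P$ is an associated prime to a statement about the localization $R_P$, and then to identify this localization with a polynomial ring in the variables of $P$ over which the cover ideal of $\HH_P$ lives. The key algebraic fact is that associated primes localize: for any ideal $I \subseteq R$ and any prime $P$, one has $P \in \Ass(R/I)$ if and only if $PR_P \in \Ass(R_P/IR_P)$. Since $J(\HH)$ is a monomial ideal and $P = (x_{i_1},\dots,x_{i_r})$ is a monomial prime, this reduces the problem to understanding $(J(\HH)^d)R_P = (J(\HH)R_P)^d$.

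**Key steps.** First I would recall the localization criterion for associated primes cited above, so that both sides of the claimed equivalence become statements about maximal ideals: the left side is equivalent to $PR_P \in \Ass(R_P/J(\HH)^d R_P)$, and the right side, since $P = x_P$ is the maximal ideal of $k[P]$, is just the statement that $P$ is an associated prime of $k[P]/J(\HH_P)^d$ as written. Second, I would analyze $J(\HH)R_P$. Write $V = P \cup Q$ where $Q = V \setminus P$ (i.e.\ $Q = \{x_j : x_j \notin P\}$); every variable $x_j$ with $j \notin P$ becomes a unit in $R_P$. A minimal generator $\bm_W$ of $J(\HH)$ corresponds to a minimal vertex cover $W$ of $\HH$; in $R_P$, we may strip off the variables of $W$ lying in $Q$, so $\bm_W R_P = \bm_{W \cap P} R_P$ up to a unit. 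The third and central step is the combinatorial claim that, after this reduction, $J(\HH)R_P = J(\HH_P) R_P$ (extended ideal), equivalently that the monomials $\bm_{W \cap P}$ as $W$ ranges over minimal vertex covers of $\HH$ generate, up to radical/minimality, the same ideal in $k[P]$ as the minimal vertex covers of $\HH_P$. One containment is easy: if $W'$ is a (minimal) vertex cover of $\HH_P$, then $W' \cup Q$ is a vertex cover of $\HH$ (any edge $E$ of $\HH$ either lies inside $P$, in which case $W'$ meets it, or has a vertex in $Q$), so $\bm_{W'}$ divides some generator of $J(\HH)$ after clearing units. For the other containment, if $W$ is a minimal vertex cover of $\HH$, then $W \cap P$ is a vertex cover of $\HH_P$ (any edge of $\HH_P$ is an edge of $\HH$ contained in $P$, hence meets $W$, hence meets $W \cap P$), so $\bm_{W \cap P}$ is divisible by a generator of $J(\HH_P)$ after clearing units. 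Hence the two extended ideals agree, so $(J(\HH)R_P)^d = (J(\HH_P)R_P)^d$. Finally, since $R_P = k[P]_{x_P}$ localized at variables not in $P$, and $J(\HH_P)$ already lives in $k[P]$, a further localization argument (localizing $k[P]/J(\HH_P)^d$ at its maximal ideal, which does nothing new to the associated-prime-at-the-maximal-ideal question, or equivalently tracking that $R_P$ is a localization of $k[P]$ by inverting variables disjoint from $P$) gives the desired equivalence.

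**Main obstacle.** The genuinely content-bearing step is the middle one: verifying that passing to the localization $R_P$ turns $J(\HH)$ into (the extension of) $J(\HH_P)$ and not some larger or smaller ideal. The subtlety is that a minimal vertex cover of $\HH$ need not restrict to a \emph{minimal} vertex cover of $\HH_P$, and conversely not every minimal vertex cover of $\HH_P$ extends to a minimal vertex cover of $\HH$ — so the generators do not match on the nose, only up to the usual "minimal generators versus all monomials in the ideal" slack. Handling this correctly requires being careful that we are comparing the \emph{ideals} generated (where divisibility of monomials is what matters), not the literal generating sets, and that inverting the variables in $Q$ is exactly what kills the discrepancy. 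A secondary, purely bookkeeping point is matching $R_P$ with a localization of $k[P]$: $R_P = k[x_1,\dots,x_n]_P$ is the localization of $k[P][x_j : j \notin P]$ at $x_P$, which by standard commutative algebra is $k[P]_{x_P}$ after inverting the $x_j$, $j \notin P$; since those variables do not appear in $J(\HH_P)$, the associated-prime condition at $P$ is unaffected. Once these two points are handled, the proof is a direct chain of equivalences.
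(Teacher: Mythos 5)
Your argument is correct, but it takes a genuinely different route from the paper. The paper's proof is elementary and self-contained: starting from a witness monomial $T$ with $J(\HH)^d : T = P$, it shows one may multiply $T$ by $(x_{m+1}\cdots x_n)^d$ without changing the colon ideal, and then compares the factors supported on $k[P]$ with the rest to transfer the witness to $J(\HH_P)^d$, and conversely builds a witness for $J(\HH)^d$ from one for $J(\HH_P)^d$ by appending $(x_{m+1}\cdots x_n)^d$. You instead localize at $P$ and reduce everything to two standard facts (associated primes localize; associated primes of an ideal of $k[P]$ extended to the polynomial ring $R$, and then localized at $P$, detect exactly the maximal ideal $x_P$) plus the combinatorial identity $J(\HH)R_P = J(\HH_P)R_P$, which you verify correctly in both directions via $W\mapsto W\cap P$ and $W'\mapsto W'\cup Q$. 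Your route is shorter and more conceptual, isolating the combinatorics in one ideal equality and outsourcing the ring theory; the paper's route avoids invoking behavior of $\Ass$ under flat/polynomial extension and produces explicit annihilator monomials, which is in the spirit of later arguments in the paper (e.g., Theorem \ref{simplicialvertex} and Remark \ref{r.ann}). One imprecision to fix in a final write-up: $R_P$ is \emph{not} obtained from $k[P]_{x_P}$ merely by inverting the variables outside $P$ (elements such as $1 + x_j x_{i_1}$ with $x_j\notin P$ also become units), so the last transfer step should be justified either by flatness of $k[P]\to R_P$ and the associated-prime formula for flat base change, or by noting that a monomial primary decomposition of $J(\HH_P)^d$ in $k[P]$ extends to one in $R$ and then localizing at $P$; with that stated, the chain of equivalences closes and there is no gap.
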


\begin{proof}  To simplify our notation, we take $P = (x_1,\dots,x_m)$, and hence
$k[P] = k[x_1,\dots,x_m]$.

$(\Rightarrow)$  Suppose that $P = (x_1,\dots,x_m) \in \Ass(R/J(\HH)^d)$. Then there
exists a monomial $T$ such that $J(\HH)^d:T = P$. We can rewrite $T$ as $T = T_1T_2$,
where $T_1$ is a monomial in $k[P]$ and $T_2$ is a monomial in $\{x_{m+1},\dots,x_n\}$.

For any monomial $u$ in the variables $\{x_{m+1},\dots,x_n\}$, we claim that
$J(\HH)^d:Tu = J(\HH)^d:T$.  To see this, first note that $Tu \not \in J(\HH)^d$,
for if it were, then $u \in J(\HH)^d:T = P$, which is false since $u \not\in P$.
For any $x_j \in P$, $(Tu)x_j = (Tx_j)u \in J(\HH)^d$ since $Tx_j \in J(\HH)^d$.  So $P \subseteq J(\HH)^d:Tu$.  Finally, take any monomial $w \in R$ such that $w \in J(\HH)^d:Tu$.
If $w$ is a monomial only in the variables $\{x_{m+1},\dots,x_n\}$, then
$(Tu)w = T(uw) \in J(\HH)^d$ implies that $uw \in P$, which is again a contradiction
since neither $u$ nor $w$ is divisible by any of $\{x_1,\dots,x_m\}$.  So $J(\HH)^d:Tu = P$.

As a consequence of the above discussion, we can multiply $T$ by a suitable monomial $u$ in the
variables $\{x_{m+1},\dots,x_n\}$ so that $T = T_1T_2$ with $T_2 = (x_{m+1}\cdots x_n)^dT'_2$. That is,
\[J(\HH)^d:T_1(x_{m+1}\cdots x_n)^dT'_2 = (x_1,\dots,x_m).\]

We now show that $J(\HH_P)^d:T_1 = (x_1,\dots,x_m)$ in $k[P]$. First, we show that
$T_1 \not\in J(\HH_P)^d$. If it were, then there exists monomials $m_1,\dots,m_d \in J(\HH_P)$ such
that  $T_1 = m_1\cdots m_dM$ for some monomial $M \in k[P]$. But then
\begin{eqnarray*}
T &=& T_1(x_{m+1}\cdots x_n)^dT'_2  = (m_1\cdots m_dM) (x_{m+1}\cdots x_n)^dT'_2 \\
& =& [m_1(x_{m+1}\cdots x_n)][m_2(x_{m+1}\cdots x_n)]\cdots [m_d(x_{m+1}\cdots x_n)]MT'_2.
\end{eqnarray*}
Note that for each $i =1,\dots,d$, $m_i(x_{m+1}\cdots x_n)$ is a vertex cover of $\HH$
since $m_i$ covers $\HH_P$, and $x_{m+1}\cdots x_n$ covers the remaining edges. The
above expression thus implies that $T \in J(\HH)^d$, a contradiction. So $T_1 \not\in J(\HH_P)^d$.

For any $x_i \in P$, we have $Tx_i \in J(\HH)^d$.  Thus, there exists $n_1,\ldots,n_d \in J(\HH)$
such that
\begin{eqnarray*}
Tx_i & = & T_1T_2x_i = n_1 \cdots n_dN =  n_{1,1}n_{1,2} \cdots n_{d,1}n_{d,2}N,
\end{eqnarray*}
where we have written each $n_i$ as $n_i = n_{i,1}n_{i,2}$ with $n_{i,1}$ a monomial in $k[P]$
and $n_{i,2}$ a monomial in the remaining variables.  Thus, if we compare the monomials in
$k[P]$ on both sides of the above expression, we get $(n_{1,1}\cdots n_{d,1}) \mid T_1x_i.$
But each $n_{i,1}$ corresponds to a vertex cover of $\HH_P$ and thus, is an element of $J(\HH_P)$.
So $T_1x_i \in J(\HH_P)^d$. Thus the maximal ideal $(x_1,\dots,x_m) \subseteq J(\HH_P)^d:T_1$,
and since $T_1 \not\in J(\HH_P)^d$, we have $J(\HH_P)^d:T_1 = (x_1,\dots,x_m)$, as desired.

($\Leftarrow$) Suppose $P = (x_1,\dots,x_m) \in \Ass(k[P]/J(\HH_P)^d)$. Thus there exists a
monomial $T \in k[P]$ with $T \not\in J(\HH_P)^d$ such that $J(\HH_P)^d:T = P$. We will
show that \[J(\HH)^d:T(x_{m+1}\cdots x_n)^d = (x_1,\dots,x_m).\]

We first observe that $T(x_{m+1}\cdots x_n)^d \not\in J(\HH)^d$.  If it were, then there
exist $n_1,\dots,n_d \in J(\HH)$ such that $T(x_{m+1}\cdots x_n)^d = n_1\cdots n_dM.$
Rewriting each $n_i$ as $n_i = n_{i,1}n_{i,2}$, where $n_{i,1}$ is a monomial in $k[P]$, and $n_{i,2}$
is a monomial in the variables $\{x_{m+1},\dots,x_n\}$, we have $n_{1,1}\cdots n_{d,1} \mid T$.
 But each $n_{i,1}$ corresponds to a
vertex cover of $\HH_P$ since $n_i$ corresponds to a vertex cover of $\HH$.
This means that $T \in J(\HH_P)^d$, a contradiction.

Now let $x_i$ be a generator of $P$.  In the ring $k[P]$, $Tx_i = m_1\cdots m_dM$ with
$m_i \in J(\HH_P)$ for each $i$. But then in $R$,
\[T(x_{m+1}\cdots x_n)^dx_i = [m_1(x_{m+1}\cdots x_n)]\cdots [m_d(x_{m+1}\cdots x_n)]M.\]
For each $i = 1,\dots,d$, the monomial $m_i(x_{m+1}\cdots x_n)$ corresponds to a vertex
cover of $\HH$ and thus is an element of $J(\HH)^d$. Hence $P \subseteq J(\HH)^d:T(x_{m+1}\cdots x_n)^d$.
 For the reverse inclusion, consider any monomial $w \in J(\HH)^d:T(x_{m+1}\cdots x_n)^d$.
 If there exists some variable $x_i \in \{x_{m+1},\dots,x_n\}$ such that $x_i \mid w$,
then $\frac{w}{x_i} \in J(\HH)^d:T(x_{m+1}\cdots x_n)^d$.  Indeed,
if $w = w'x_i$, then  $T(x_{m+1}\cdots x_n)^d(w'x_i) = m_1\cdots m_dM,$
and because each $m_i$
 is squarefree and can be divisible by at most one $x_i$, we have $x_i \mid M$.  So \[T(x_{m+1}\cdots x_n)^dw' = m_1\cdots m_d\left(\frac{M}{x_i}\right),\] whence $w' \in  J(\HH)^d:T(x_{m+1}\cdots x_n)^d$.
We can now reduce to the
case that $w$ is a monomial
only in the variables of $\{x_1,\dots,x_m\}$.  But this just means that $w \in P$, as
 desired.
\end{proof}


\section{An algebraic method to compute the chromatic number} \label{s.chromatic}

We give an algebraic description of  $\chi(\HH)$.
Sturmfels and Sullivant also gave an algebraic description of $\chi(\HH)$ 
(see \cite[Theorem 3.1]{SS}), which is equivalent to our result
via Alexander duality.  Our proof has the advantage that it 
avoids the language of secant ideals used in \cite{SS}, and it
can be easily adapted to compute the $b$-fold chromatic number.

The following fact, which follows directly from the definitions, is stated as a lemma so that we
may refer back to it throughout the paper.
The proof is omitted.

\begin{lemma}\label{color2vertexcover}
Let $\HH = (V, \mathcal{E})$ be a simple hypergraph, and let $C \subseteq V$ be a
subset of the vertices. Then $C$ is an independent set if and only if $V \backslash C$
is a vertex cover of $\HH$.
\end{lemma}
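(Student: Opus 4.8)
\textbf{Proof plan for Lemma~\ref{color2vertexcover}.}
The statement is a purely set-theoretic reformulation of two definitions, so the plan is simply to unwind both sides and observe they say the same thing. First I would recall the definitions in play: $C \subseteq V$ is an \emph{independent set} of $\HH$ means that no edge $E \in \mathcal{E}$ satisfies $E \subseteq C$; and a subset $W \subseteq V$ is a \emph{vertex cover} of $\HH$ means that $W \cap E \neq \emptyset$ for every $E \in \mathcal{E}$.

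The core observation is the elementary fact that for an edge $E \subseteq V$ and a subset $C \subseteq V$, we have $E \not\subseteq C$ if and only if $(V \setminus C) \cap E \neq \emptyset$: indeed, $E \not\subseteq C$ means there is a vertex of $E$ lying outside $C$, i.e.\ lying in $V \setminus C$, which is exactly the condition $(V\setminus C)\cap E \neq \emptyset$. (Here one uses that $E \subseteq V$, which holds since every edge is a subset of the vertex set.) Applying this equivalence uniformly over all edges $E \in \mathcal{E}$ gives: $C$ is independent $\iff$ $E \not\subseteq C$ for all $E \in \mathcal{E}$ $\iff$ $(V \setminus C) \cap E \neq \emptyset$ for all $E \in \mathcal{E}$ $\iff$ $V \setminus C$ is a vertex cover of $\HH$. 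That is the whole argument; there is essentially no obstacle, the only thing to be slightly careful about is the quantifier structure (the equivalence must be applied for every edge before taking the conjunction), and this is the reason the original authors chose to omit the proof.
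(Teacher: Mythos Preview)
Your proposal is correct and is precisely the kind of direct unwinding of definitions the authors had in mind; indeed, the paper explicitly states that the lemma ``follows directly from the definitions'' and omits the proof entirely. Your careful handling of the quantifier structure and the set-theoretic equivalence $E \not\subseteq C \iff (V \setminus C) \cap E \neq \emptyset$ is exactly what is needed.
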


\begin{theorem}\label{colortheorem}
Let $\HH$ be a finite simple hypergraph on $V = \{x_1,\dots,x_n\}$ with cover ideal $J$.
Then $\bm_V^{d-1} \in J^d$ if and only if $\chi(\HH) \le d$.  In particular,
\[\chi(\HH) = \min\{d ~|~ \bm_V^{d-1} \in J^d \}.\]
\end{theorem}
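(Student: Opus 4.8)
The plan is to prove the biconditional $\bm_V^{d-1} \in J^d \iff \chi(\HH) \le d$ directly from the definitions, translating the combinatorial content of a $d$-coloring into a factorization of the monomial $\bm_V^{d-1}$ as a product of $d$ vertex-cover monomials. The key translation device is Lemma~\ref{color2vertexcover}: a subset $C \subseteq V$ is an independent set exactly when its complement $V \setminus C$ is a vertex cover, so $\bm_{V \setminus C}$ is a generator of $J$ (or a multiple of one).

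For the forward direction, suppose $\bm_V^{d-1} \in J^d$. Then there exist vertex covers $W_1, \dots, W_d$ of $\HH$ with $\bm_{W_1} \cdots \bm_{W_d} \mid \bm_V^{d-1}$, so that for each variable $x_i$, the number of sets $W_j$ containing $x_i$ is at most $d-1$; equivalently, each $x_i$ lies outside at least one $W_j$. Define $C_j' = V \setminus W_j$; each $C_j'$ is an independent set by Lemma~\ref{color2vertexcover}, and the condition just derived says $V = C_1' \cup \cdots \cup C_d'$. This is a covering of $V$ by $d$ independent sets, not yet a partition, but one can greedily discard repeated vertices (assigning each vertex to exactly one of the $C_j'$ containing it) to obtain disjoint independent sets $C_1, \dots, C_d$ with $V = C_1 \sqcup \cdots \sqcup C_d$; subsets of independent sets are independent, so this is a genuine $d$-coloring and $\chi(\HH) \le d$.

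For the reverse direction, suppose $\chi(\HH) \le d$, and fix a $d$-coloring $V = C_1 \sqcup \cdots \sqcup C_d$ (if $\HH$ is colorable with fewer than $d$ colors, pad with empty color classes). By Lemma~\ref{color2vertexcover}, each $W_j := V \setminus C_j$ is a vertex cover of $\HH$, hence $\bm_{W_j} \in J$. Now compute the product $\bm_{W_1} \cdots \bm_{W_d}$: the exponent of $x_i$ in this product is the number of indices $j$ with $x_i \notin C_j$, which is $d - 1$ since $x_i$ lies in exactly one color class. Thus $\bm_{W_1} \cdots \bm_{W_d} = \bm_V^{d-1}$, exhibiting $\bm_V^{d-1}$ as an element of $J^d$. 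The ``in particular'' statement is then immediate: $\chi(\HH)$ is the least $d$ for which the equivalent condition $\bm_V^{d-1} \in J^d$ holds. (One should also note the degenerate cases are harmless: $\HH$ has no isolated vertices, so $\chi(\HH) \ge 2$, and $d = 1$ gives $\bm_V^0 = 1 \notin J$ since $J$ is a proper ideal, consistent with $\chi(\HH) > 1$.)

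I expect no serious obstacle here — the result is essentially a bookkeeping identity once Lemma~\ref{color2vertexcover} is in hand. The one point requiring a little care is the forward direction, where the factorization of $\bm_V^{d-1}$ produces a \emph{covering} of $V$ by independent sets rather than a partition; the mild subtlety is observing that passing to a partition (by arbitrarily breaking ties) preserves independence, which is clear since any subset of an independent set is independent. Everything else is a direct exponent count.
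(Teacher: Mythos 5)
Your proof is correct and follows essentially the same route as the paper: complements of the vertex covers in the factorization give a covering of $V$ by independent sets which is refined to a partition, and conversely complements of color classes give cover monomials whose product is $\bm_V^{d-1}$ (your padding by empty classes is just a repackaging of the paper's step of multiplying by extra factors $\bm_V \in J$ when $\chi(\HH) < d$). No gaps; nothing further is needed.
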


\begin{proof}
($\Rightarrow$)  Suppose that $\bm_V^{d-1} \in J^d$.  Then there exists $d$ minimal vertex
covers $W_1,\dots,W_d$ (not necessarily distinct) such that
$ \bm_{W_1}\cdots \bm_{W_d} \mid (x_1\cdots x_n)^{d-1} = \bm_V^{d-1}.$
 For each $x_i \in \{x_1,\dots,x_n\}$, there exists some $W_j$ such that $x_i \not\in W_j$;
otherwise, if $x_i \in W_j$ for all $1 \leq j \leq d$, then the power of $x_i$ is $d$
in $\bm_{W_1}\cdots \bm_{W_d}$, from which it follows that $\bm_{W_1}\cdots \bm_{W_d}$
cannot divide $\bm_V^{d-1}$, a contradiction.

Now form the following $d$ sets:
\begin{eqnarray*}
C_1 & = & V \setminus W_1 \\
C_2 & = & (V \setminus W_2) \setminus C_1 \\
C_3 & = & (V \setminus W_3) \setminus (C_1 \cup C_2) \\
& \vdots & \\
C_d & = & (V \setminus W_d) \setminus (C_1 \cup \cdots \cup C_{d-1}).
\end{eqnarray*}
It suffices to show that $C_1,\dots,C_d$ form a $d$-coloring of $\HH$.
We first note that by construction, the $C_i$s are pairwise disjoint.
As well, because each $C_i \subseteq V \setminus W_i$, each $C_i$ is an independent set.
 So it remains to show that $V = C_1 \cup \cdots \cup C_d$.  If $x \in V$,
there exists some $W_j$ such $x \not\in W_j$, whence $x \in V \setminus W_j$.
Hence $x \in C_j$ or $x \in (C_1 \cup \cdots \cup C_{j-1})$. 

($\Leftarrow$)  Let $C_1 \cup \cdots \cup C_{\chi(\HH)}$ be a $\chi(\HH)$-coloring of $\HH$.
 By Lemma \ref{color2vertexcover}, we know that
\[Y_i = C_1 \cup \cdots \cup \widehat{C_i} \cup \cdots \cup C_{\chi(\HH)}
~~\mbox{for each $i=1,\dots,\chi(\HH)$}\] is a vertex cover of $\HH$, and hence
$\bm_{Y_i} \in J$ for $i=1,\dots,\chi(\HH)$.
It follows that
 \[\prod_{i=1}^{\chi(\HH)} \bm_{Y_i} = \Big(\prod_{i=1}^{\chi(\HH)} \bm_{C_i}\Big)^{\chi(\HH)-1} = (x_1\cdots x_n)^{\chi(\HH)-1} = \bm_V^{\chi(H)-1} \in J^{\chi(\HH)}.\]
Thus $\bm_V^{d-1} = \bm_{V}^{\chi(\HH)-1}\bm_V^{d-\chi(\HH)} \in J^{\chi(\HH)}J^{d-\chi(\HH)} = J^{d}$ because $\bm_V \in J$.
\end{proof}

\begin{remark}
The \emph{Macaulay 2} 
{\tt EdgeIdeals} package \cite{EdgeIdeals} uses Theorem \ref{colortheorem} to compute $\chi(\HH)$
algebraically.  To compute this number, one needs $J = I(\HH)^{\vee}$,
which is equivalent to finding all the minimal vertex covers of $\HH$. Because $J$ is a
monomial ideal, the operations of computing $J^d$ and ideal membership
are both fairly simple; the bottleneck of this procedure
is computing $J$, causing the algorithm to be exponential time in general.
\end{remark}

\begin{example}
A graph $G$ is a {\bf bipartite graph} if there is a bipartition $V = V_1 \cup V_2$ such
that every edge of $G$ has one vertex in $V_1$ and the other vertex in $V_2$.
A graph $G$ with at least one edge is bipartite if and only if $\chi(G) = 2$
if and only if $\bm_V \in J^2$.
\end{example}

A modification to Theorem \ref{colortheorem} allows us
to calculate the $b$-fold chromatic number.

\begin{definition} A {\bf $b$-fold coloring} of a graph $G$ is an assignment
to each vertex a set of $b$ distinct colors such that adjacent vertices
receive disjoint sets of colors.  The {\bf $b$-fold
chromatic number} of $G$, denoted $\chi_b(G)$, is 
the minimal number of colors needed in a $b$-fold coloring of $G$.
\end{definition}

When $b = 1$, then $\chi_b(G) = \chi(G)$.  We can now generalize Theorem \ref{colortheorem}.

\begin{theorem}\label{bfold}
Let $G$ be a finite simple graph with
cover ideal $J$. Then \[\chi_b(G) = \min\{d ~|~ \bm_V^{d-b} \in J^d \}  ~~\mbox{where
$V = \{x_1,\ldots,x_n\}$}.\]
\end{theorem}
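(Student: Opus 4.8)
The plan is to mimic the proof of Theorem~\ref{colortheorem}, replacing "$d$-coloring" by "$b$-fold $d$-coloring" and tracking the exponent on each variable more carefully. First I would restate the target as a two-sided implication: $\bm_V^{d-b} \in J^d$ if and only if $G$ has a $b$-fold $d$-coloring, and then the displayed formula for $\chi_b(G)$ follows by taking the minimal such $d$ (using, as in Theorem~\ref{colortheorem}, that $\bm_V^{d-b} \in J^d$ forces $\bm_V^{(d+1)-b} \in J^{d+1}$ since $\bm_V \in J$, so the set of valid $d$ is upward closed).

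For the $(\Leftarrow)$ direction, suppose $G$ has a $b$-fold $d$-coloring using colors $\{1,\dots,d\}$; let $S_i \subseteq V$ be the set of vertices whose color set contains the color $i$, for $i = 1,\dots,d$. Adjacent vertices get disjoint color sets, so each $S_i$ is an independent set, and hence by Lemma~\ref{color2vertexcover} the complement $Y_i = V \setminus S_i$ is a vertex cover, so $\bm_{Y_i} \in J$. Each vertex $x$ lies in exactly $b$ of the sets $S_i$ (it receives exactly $b$ colors), hence in exactly $d-b$ of the complements $Y_i$. Therefore $\prod_{i=1}^d \bm_{Y_i} = \bm_V^{d-b}$, which exhibits $\bm_V^{d-b} \in J^d$.

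For the $(\Rightarrow)$ direction, suppose $\bm_V^{d-b} \in J^d$, so there are minimal vertex covers $W_1,\dots,W_d$ (not necessarily distinct) with $\bm_{W_1}\cdots\bm_{W_d} \mid \bm_V^{d-b}$; equivalently, every vertex $x$ lies in at most $d-b$ of the $W_j$, i.e.\ $x$ lies in at least $b$ of the complements $C_j := V \setminus W_j$. Each $C_j$ is independent. Now I want to assign to each vertex $x$ a $b$-element subset of $\{1,\dots,d\}$ contained in $\{\, j : x \in C_j \,\}$; any such choice works because if $xy$ is an edge, no $C_j$ contains both $x$ and $y$, so the color sets of $x$ and $y$ are automatically disjoint. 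Since $\{\, j : x \in C_j \,\}$ has size $\ge b$, such a $b$-element subset exists; make an arbitrary choice for each vertex. This produces a $b$-fold $d$-coloring, so $\chi_b(G) \le d$. Unlike the $C_1,\dots,C_d$ cascade in Theorem~\ref{colortheorem}, here we do not need the sets to partition $V$ — only the "disjoint colors on adjacent vertices" condition — so the argument is if anything cleaner.

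The only place requiring care, and the one I'd expect to be the main (minor) obstacle, is bookkeeping the exponent translation: $\bm_{W_1}\cdots\bm_{W_d} \mid \bm_V^{d-b}$ is precisely the statement that for every $i$, $\#\{\, j : x_i \in W_j\,\} \le d-b$, and hence $\#\{\, j : x_i \in C_j\,\} \ge d - (d-b) = b$. Once this equivalence is made explicit, both directions go through exactly as above. Note also that the argument genuinely uses $|E|=2$: the reduction "$x,y$ not both in any $C_j$ $\Rightarrow$ disjoint color sets suffice" relies on edges having two vertices, which is why the statement is for graphs rather than hypergraphs.
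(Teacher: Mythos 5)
Your proposal is correct and follows essentially the same route as the paper's proof: both directions use the correspondence of Lemma~\ref{color2vertexcover} between color classes (vertices receiving a fixed color) and complements of vertex covers, with the same exponent bookkeeping showing each vertex lies in exactly $d-b$ (resp.\ at most $d-b$) of the covers. The only cosmetic difference is that where you invoke upward-closedness of the membership condition, the paper pads a coloring with $a=\chi_b(G)$ colors by multiplying by $\bm_V^{d-a}\in J^{d-a}$; these are interchangeable.
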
 

\begin{proof} Just as in Theorem \ref{colortheorem}, we will show
that $\chi_b(G) \leq d$ if and only if $\bm_V^{d-b} \in J^d$.

First, suppose that $\bm_V^{d-b} \in J^d$.  Thus, there exist $d$ vertex covers
$W_1,\ldots,W_d$ such that $\bm_V^{d-b} = \bm_{W_1}\cdots \bm_{W_d}M$.  For each $i=1,\ldots,d$,
let $C_i = V \setminus W_i$.  For each $j = 1,\ldots,n$, $x_j$ appears in at most
$d-b$ of the $W_i$s, or equivalently, $x_j$ appears in at least $b$
of $C_i$s.  Say that $x_j$ appears in $C_{i_1},\ldots,C_{i_b},C_{i_{b+1}},\ldots,C_{i_a}$.  We associate
to $x_j$ the colors $\{i_1,\ldots,i_b\}$.  We claim that this coloring is a $b$-fold
coloring of $G$.  Indeed, each vertex has received $b$ colors, so it suffices
to show that adjacent vertices receive disjoint sets of colors.  So,
suppose $x_jx_k \in E_G$, and $x_i$ is colored $\{i_1,\ldots,i_b\}$ and
$x_k$ is colored $\{l_1,\ldots,l_b\}$.  If there is a $p \in \{i_1,\ldots,i_b\}
\cap \{l_1,\ldots,l_b\}$, this means that $x_j \in C_p$ and $x_k \in C_p$.
But $C_p = V \setminus W_p$ is an independent set,
and so $x_j$ and $x_k$ cannot both be in $C_p$.  Thus $\chi_b(G) \leq d$.

For the converse direction, suppose that $\chi_b(G) = a \leq d$, and assume
that $G$ has already been given a $b$-fold coloring using $a$ colors,
say $\{1,\ldots,a\}$.  For $i=1,\ldots,a$, let 
\[W_i = \{x_j \in V ~|~ \mbox{$x_j$ does not receive color $i$}\}.\]
Since the set of vertices in a $b$-fold coloring that receive the color $i$ form
an independent set, the set $W_i$ forms a vertex cover, and hence
$\bm_{W_i} \in J$ for $i=1,\ldots,a$.  Thus $\bm_{W_1}\cdots \bm_{W_a} \in J^a$.
Since each vertex $x_j$ receives exactly $b$ distinct colors, 
each $x_j$ is not in $b$ of the $W_i$s, or equivalently, $x_j$
is in exactly $a-b$ of the  $W_i$s.  But this implies that
\[\bm_{W_1}\cdots \bm_{W_a} = \bm_V^{a-b} \in J^a.\]
Since $\bm_V \in J$, we thus have $\bm_V^{a-b}\bm_V^{d-a} = \bm_V^{d-b} \in J^aJ^{d-a} \subseteq J^d$,
as desired.
\end{proof}

\begin{example}  Let $G = C_5$ be the $5$-cycle.  A $2$-fold coloring of $G$
is given below:
\[
\begin{picture}(110,60)(0,0)
\put(0,20){\circle*{5}}
\put(-11,4){(1,2)}
\put(50,20){\circle*{5}}
\put(48,4){(3,4)}
\put(100,20){\circle*{5}}
\put(102,4){(5,1)}
\put(75,40){\circle*{5}}
\put(74,45){(2,3)}
\put(25,40){\circle*{5}}
\put(24,45){(4,5)}
\put(0,20){\line(1,0){100}}
\put(25,40){\line(1,0){50}}
\put(0,20){\line(5,4){25}}
\put(75,40){\line(5,-4){25}}
\end{picture}\]
We see that we need at least $5$ colors to find a $2$-fold coloring of $G$.  In
fact, we have $\chi_2(G) = 5$.  Thus, by Theorem \ref{bfold},
we will have $(x_1x_2x_3x_4x_5)^{5-2} \in J(G)^5$, which a computer
algebra system can easily verify.   
\end{example}

\begin{remark}  The {\bf fractional chromatic number} of a graph $G$, denoted $\chi_f(G)$,
is defined in terms of the numbers $\chi_b(G)$.  Precisely,
\[\chi_f(G) := \lim_{b \rightarrow \infty} \frac{\chi_b(G)}{b}.\]
It can be shown that there exists an integer $b$ such
that $\chi_f(G) = \chi_b(G)/b$, so Theorem \ref{bfold} may give some insight
into the value of $\chi_f(G)$.  This is particularly interesting since
$\chi_f(G)$ can also be viewed as a solution to a linear programming
problem.  See the book of Scheinerman and Ullman \cite{SU1} for more on the fractional
chromatic number.
\end{remark}

\section{Irreducible decompositions and associated primes of the powers of cover ideals} \label{s.assoc}

In this section, we explore the graph theoretic information
encoded into the associated primes of $R/J^s$ as $s$ varies.  We will
show that the irredundant irreducible decomposition of $J^s$ describes
the critically $(s+1)$-chromatic induced subhypergraphs of the $s$-th
expansion of the hypergraph associated to $\HH$.  
Our strategy to obtain an irreducible decomposition of $J^s$ 
is to first describe the minimal 
generators of the generalized Alexander dual $(J^{s})^{[{\bf s}]}$,
and then use Theorem \ref{correspondence}.  

To find the minimal generators of $(J^{s})^{[{\bf s}]}$, we use the following result of Sturmfels and Sullivant \cite{SS}, which
was first proved for graphs, but also holds for hypergraphs.

\begin{theorem} \label{ss-maintheorem}
Let $\HH$ be a finite simple hypergraph with edge ideal $I(\HH)$. Then $I(\HH)^{\{s\}}$ is
generated by the squarefree monomials $\bm_W$ such that $\HH_W$, the induced
subhypergraph on the vertices $W$, is not $s$-colorable. That is,
\[ I(\HH)^{\{s\}} = (\bm_W ~\big|~ \chi(\HH_W) > s),\] and its minimal generators are
monomials $\bm_W$ such that $\HH_W$ is critically $(s+1)$-chromatic.
\end{theorem}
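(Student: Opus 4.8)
The plan is to connect the secant ideal $I(\HH)^{\{s\}}$ with colorings of $\HH$ by way of Theorem~\ref{standardmon}, which describes the standard monomials of a join of squarefree monomial ideals. Recall that the standard monomials of $I(\HH)$ are exactly the squarefree monomials $\bm_W$ such that $W$ is an independent set of $\HH$ (plus all non-squarefree monomials, but those will not matter for identifying the \emph{squarefree} part). First I would record the following: since $I(\HH)^{\{s\}}$ is squarefree (by Theorem~\ref{standardmon}), to describe it is enough to determine which squarefree monomials $\bm_W$ lie in it, equivalently which squarefree $\bm_W$ are \emph{not} standard monomials of $I(\HH)^{\{s\}}$. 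By Theorem~\ref{standardmon}, $\bm_W$ is a standard monomial of $I(\HH)^{\{s\}} = I(\HH)*\cdots*I(\HH)$ ($s$ factors) if and only if $\bm_W = m_1 \cdots m_s$ where each $m_i$ is a standard monomial of $I(\HH)$.

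Next I would translate the factorization condition into a coloring statement. Since $\bm_W$ is squarefree, in any factorization $\bm_W = m_1\cdots m_s$ into standard monomials of $I(\HH)$ each $m_i$ must itself be squarefree, say $m_i = \bm_{C_i}$, and the $C_i$ must be pairwise disjoint with $C_1 \cup \cdots \cup C_s = W$. Moreover $\bm_{C_i}$ being a standard monomial of $I(\HH)$ means precisely that $C_i$ contains no edge of $\HH$, i.e.\ $C_i$ is an independent set in $\HH_W$ (here I use that an edge of $\HH$ contained in $W$ is an edge of $\HH_W$). Hence $\bm_W$ is a standard monomial of $I(\HH)^{\{s\}}$ if and only if $W$ admits a partition into $s$ independent sets of $\HH_W$ (allowing empty classes, which is harmless), i.e.\ if and only if $\chi(\HH_W) \le s$. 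Taking the contrapositive: $\bm_W \in I(\HH)^{\{s\}}$ exactly when $\chi(\HH_W) > s$. This gives the displayed equality $I(\HH)^{\{s\}} = (\bm_W \mid \chi(\HH_W) > s)$.

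Finally, for the statement about minimal generators, I would argue as follows. A squarefree monomial $\bm_W$ with $\chi(\HH_W) > s$ is a minimal generator of $I(\HH)^{\{s\}}$ iff no proper divisor $\bm_{W'}$ (i.e.\ no $W' \subsetneq W$) also satisfies $\chi(\HH_{W'}) > s$; equivalently, iff for every vertex $x \in W$ we have $\chi(\HH_{W \setminus \{x\}}) \le s$, i.e.\ $\chi(\HH_{W\setminus\{x\}}) < \chi(\HH_W)$. Combined with $\chi(\HH_W) > s$, the minimality of the drop forces $\chi(\HH_W) = s+1$ (removing one vertex lowers the chromatic number by at most one), so this says exactly that $\HH_W$ is critically $(s+1)$-chromatic. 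Conversely, if $\HH_W$ is critically $(s+1)$-chromatic, then $\chi(\HH_W) = s+1 > s$ while $\chi(\HH_{W\setminus\{x\}}) \le s$ for all $x\in W$, so $\bm_W$ lies in $I(\HH)^{\{s\}}$ but no proper divisor does, making $\bm_W$ a minimal generator.

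The only subtle point — and the step I would be most careful about — is the passage between ``$C_i$ contains no edge of $\HH$'' and ``$C_i$ is independent in $\HH_W$'': one must note that $\mathcal{E}_{\HH_W} = \{E \in \mathcal{E}_\HH \mid E \subseteq W\}$, so when $C_i \subseteq W$ the edges of $\HH$ it could contain are precisely the edges of $\HH_W$, and that the colorings obtained have disjoint (not merely distinct-labelled) color classes whose union is all of $W$, matching Definition~\ref{chi}. Everything else is a direct unwinding of Theorems~\ref{standardmon} and the definition of the chromatic number, together with the elementary fact that deleting a vertex changes $\chi$ by at most $1$. I should also remark, as the authors do, that although Theorem~\ref{standardmon} in \cite{SS} is stated for graphs, the proof and Theorem~\ref{standardmon} itself are valid verbatim for hypergraphs, so no extra work is needed to handle general $\HH$.
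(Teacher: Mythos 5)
Your proof is correct, and it is in substance the Sturmfels--Sullivant argument written out in full: the paper's own proof of Theorem~\ref{ss-maintheorem} is essentially a citation --- it invokes \cite[Theorem 3.2]{SS} for graphs and checks that the argument there (which rests on the same standard-monomial description of joins recorded as Theorem~\ref{standardmon}, together with the radicality of $I^{\{s\}}$ for squarefree $I$) goes through after replacing ``independent set'' and ``coloring'' by their hypergraph versions. What your version buys is a self-contained proof: the equivalence ``$\bm_W$ is standard for the join iff $W$ partitions into $s$ independent sets of $\HH_W$ iff $\chi(\HH_W)\le s$'' and the identification of minimal generators with critically $(s+1)$-chromatic induced subhypergraphs (using that deleting a vertex lowers $\chi$ by at most one, which is where the hypothesis $|E|\ge 2$ quietly enters, since the deleted vertex can be given a fresh color) are exactly the steps the paper outsources to \cite{SS}. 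Two small inaccuracies, neither load-bearing: first, the parenthetical ``plus all non-squarefree monomials'' is false as stated --- the standard monomials of $I(\HH)$ are precisely the monomials whose \emph{support} is an independent set, and, e.g., $x_1^2x_2$ lies in $I(\HH)$ when $\{x_1,x_2\}$ is an edge --- but your argument only ever uses the correct characterization of the squarefree standard monomials, so nothing breaks. Second, the squarefreeness of $I(\HH)^{\{s\}}$ is not literally part of the statement of Theorem~\ref{standardmon}; it does follow from it in one line (if $\bm_{\mathrm{supp}(m)}=m_1\cdots m_s$ with each $m_i$ standard, distribute the extra multiplicities of each variable of $m$ to the factor containing it, so $m$ is again a product of standard monomials), and it is the same radicality fact the paper's proof invokes, but you should justify it or cite it from \cite{SS} rather than attribute it to Theorem~\ref{standardmon} as stated. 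Finally, your closing caveat is unnecessary: Theorem~\ref{standardmon} is stated for arbitrary squarefree monomial ideals, so it applies verbatim to hypergraph edge ideals; it is the coloring statement \cite[Theorem 3.2]{SS}, not the join lemma, that needs the hypergraph adaptation you have in effect carried out.
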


\begin{proof}
When $\HH$ is a graph, this is precisely \cite[Theorem 3.2]{SS}. We need only check
that the proof goes through for hypergraphs. \cite[Theorem 3.2]{SS} relies on the
proof of \cite[Proposition 3.1]{SS}, and it is easy to see that the argument still
 works after substituting the generalized definitions of coloring and independent
set for hypergraphs. The last step in the proof requires that $I(\HH)^{\{s\}}$ be radical,
but $I^{\{s\}}$ is radical when $I$ is any squarefree monomial ideal, so using edge ideals of
hypergraphs instead of graphs is no problem.
\end{proof}

It follows from \cite[Corollary 2.7]{SS} that the generators
of $I(\HH)^{\{s\}}$ are precisely the squarefree monomial generators of $(J^s)^{[{\bf s}]}$.
We wish to interpret the remaining generators of $(J^s)^{[{\bf s}]}$ in terms
of the hypergraph $\HH$.  To carry out this program, we must
make use of the $s$-th expansion of a (hyper)graph.
We thank an anonymous referee of a previous paper for introducing us to this idea
and for suggesting the statement of Theorem~\ref{assoc-depolar} in the case of graphs.

\begin{definition} \label{expansion}
Let $\HH = (V,\mathcal{E})$ be a hypergraph over the vertices $V= \{x_1, \dots, x_n\}$.
For each $s$, we define the $s$-th \textbf{expansion} of $\HH$ to be the hypergraph
obtained by replacing each vertex $x_i \in V$ by a collection $\{x_{ij} ~|~ j = 1, \dots, s\}$,
and replacing $\mathcal{E}$ by the edge set that consists of edges $\{x_{i_1l_1}, \dots, x_{i_rl_r}\}$
whenever $\{x_{i_1}, \dots, x_{i_r}\} \in \mathcal{E}$ and edges
$\{x_{il}, x_{ik}\}$ for $l \not= k$. We denote this hypergraph by $\HH^s$. The new
variables $x_{ij}$ are called the {\bf shadows} of $x_i$. The process of setting $x_{il}$
to equal to $x_i$ for all $i$ and $l$ is called the \textbf{depolarization}.
\end{definition}

\begin{remark}
Although the expansion construction is a common construction in graph theory,
there does not appear to be any consistent
terminology.   In some cases, the expansion of a vertex is called the replication
of a vertex.  There is a similar construction, called either the duplication or
parallelization
of a vertex, where the definition is the same as above except we
do not add the edges $\{x_{il}, x_{ik}\}$ for $l \not= k$, i.e., the
vertices  $\{x_{ij} ~|~ j = 1, \dots, s\}$ form an independent set instead
of a clique as in our case.  Mart\'{i}nez-Bernal, Renter\'{i}a,  and Villarreal
\cite{MRV} showed how the parallelization of a hypergraph
$\HH$ is related to the Rees algebra of $I(\HH)$.
\end{remark}

Let $\mathbf{s} = (s, \dots, s) \in \N^n$. Recall that for a subset $W \subseteq V$, $x_W$
denotes the ideal $(x ~|~ x \in W)$. 
The following result is a higher-dimensional
analog of \cite[Theorem 3.2]{SS};  not only does it apply to
hypergraphs, but it gives a graph theoretic interpretation
for all the generators of $(J^{s})^{[{\bf s}]}$, not
just the squarefree generators.

\begin{theorem} \label{assoc-depolar}
Let $\HH = (V_\HH, \mathcal{E}_\HH)$ be a finite simple hypergraph 
with cover ideal $J = J(\HH)$.
For any $s \ge 1$, we have
\[(J^s)^{[\mathbf{s}]} = (\overline{\bm_T} ~\big|~ \chi(\HH^s_T) > s )\]
where $\overline{\bm_T}$, the depolarization of $\bm_T$, is obtained by setting
$x_{ij}$ to be $x_i$ for any $i,j$.
\end{theorem}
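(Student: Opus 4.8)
The plan is to identify $(J^s)^{[\mathbf{s}]}$ with the $s$-th secant ideal of the edge ideal of the expansion $\HH^s$, and then invoke Theorem~\ref{ss-maintheorem}. First I would set up the polarization framework: introduce the polynomial ring $S = k[x_{ij} \mid 1 \le i \le n, 1 \le j \le s]$ on the shadow variables, let $\mathrm{pol}(J^s) \subseteq S$ denote the polarization of $J^s$, and recall that polarization commutes with taking powers, so $\mathrm{pol}(J^s) = \mathrm{pol}(J)^s$ up to the combinatorics of how the exponents on each $x_i$ get spread across its $s$ shadows. The key observation is that the squarefree monomial ideal $\mathrm{pol}(J^s)$ is exactly the cover ideal $J(\HH^s)$ of the $s$-th expansion: a minimal vertex cover of $\HH^s$ must, for each $i$, either contain all $s$ shadows $x_{i1}, \dots, x_{is}$ (because of the clique edges $\{x_{il}, x_{ik}\}$) or contain none of them, and the ``all or nothing'' covers that pick up at least one full clique for each edge of $\HH$ correspond precisely to the generators of $J^s$ after polarizing. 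This is the step I expect to require the most care — checking that the generator correspondence is exact (not just an inclusion), and that depolarizing $J(\HH^s)$ recovers $J^s$ on the nose, including the behavior of the clique edges.

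Next I would pass to Alexander duals. By the standard relationship between polarization, Alexander duality, and the vector $\mathbf{s} = (s,\dots,s)$ (see \cite[Theorem 5.27]{MS} and the discussion around \cite[Corollary 2.7]{SS}), taking the generalized Alexander dual $(J^s)^{[\mathbf{s}]}$ corresponds, under depolarization, to taking the ordinary squarefree Alexander dual of $J(\HH^s)$, which is $I(\HH^s)$. More precisely, I would show that $\overline{(I(\HH^s)^{\{s\}})} = (J^s)^{[\mathbf{s}]}$, where the overline denotes depolarization: the generators $\mathbf{x}^{\mathbf{a}\setminus\mathbf{b}}$ of $(J^s)^{[\mathbf{s}]}$, one for each irreducible component $\mathfrak{m}^{\mathbf{b}}$ in the irreducible decomposition of $J^s$, biject with the squarefree monomials $\bm_W$ of $S$ that are minimal with the property of meeting every polarized generator of $J^s$ — and those are exactly the minimal generators of the secant ideal $I(\HH^s)^{\{s\}}$ by \cite[Corollary 2.7]{SS} applied to the hypergraph $\HH^s$. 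The bookkeeping here is the conversion $a_i \setminus b_i = a_i + 1 - b_i$ (with $a_i = s$) translating a multidegree bound on $J^s$ into a squarefree monomial in the shadow variables.

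Finally I would apply Theorem~\ref{ss-maintheorem} to the hypergraph $\HH^s$: its $s$-th secant ideal $I(\HH^s)^{\{s\}}$ is generated by the monomials $\bm_W$ with $\chi((\HH^s)_W) > s$, with minimal generators coming from the critically $(s+1)$-chromatic induced subhypergraphs. Depolarizing these $\bm_W$ gives exactly the monomials $\overline{\bm_T}$ appearing in the claimed formula, and combining with the identification from the previous paragraph yields
\[
(J^s)^{[\mathbf{s}]} = \big(\overline{\bm_T} ~\big|~ \chi(\HH^s_T) > s\big),
\]
as desired. The remaining routine point is to observe that Theorem~\ref{ss-maintheorem} was already noted to hold for hypergraphs, so no new colorability argument is needed; and that the clique edges added within each shadow class are harmless because any independent set of $\HH^s$ can contain at most one shadow of each $x_i$, which is precisely what makes colorings of $\HH^s$ behave well under depolarization. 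The main obstacle, as flagged, is the first step: pinning down that the polarization of $J^s$ is the cover ideal of $\HH^s$ exactly, which is what licenses everything downstream.
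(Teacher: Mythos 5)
You have correctly identified the target — the whole theorem amounts to proving that the depolarization of $I(\HH^s)^{\{s\}}$ equals $(J^s)^{[\mathbf{s}]}$, which is exactly how the paper frames it — but the route you propose to that identity fails at its first step, and the failure propagates. It is not true that $\mathrm{pol}(J^s) = J(\HH^s)$: because of the clique edges $\{x_{il},x_{ik}\}$, a vertex cover of $\HH^s$ must contain at least $s-1$ shadows of \emph{every} vertex, so the ``all or nothing'' description you give is wrong for the expansion (it describes the parallelization instead); a minimal vertex cover of $\HH^s$ consists of all $s$ shadows of each vertex in a minimal vertex cover $U$ of $\HH$ together with $s-1$ shadows of each vertex outside $U$. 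Concretely, let $\HH$ be the single edge $\{x_1,x_2\}$ and $s=2$. Then $\mathrm{pol}(J^2)=(x_{11}x_{12},\,x_{11}x_{21},\,x_{21}x_{22})$ is generated in degree $2$, while $\HH^2=\K_4$ and $J(\HH^2)$ is generated in degree $3$. The same example breaks your second identification: the squarefree Alexander dual of $\mathrm{pol}(J^2)$ is generated in degree $2$, whereas $I(\HH^2)^{\{2\}}$ is generated by the four degree-$3$ triangle monomials; the two sides only agree after one depolarizes the secant ideal and compares with $(J^2)^{[(2,2)]}=(x_1^2x_2,\,x_1x_2^2)$, which is the statement to be proved, not a consequence of a duality formalism. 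Note also that polarization does not commute with powers, and the naive ``polarize, take squarefree dual, depolarize'' recipe does not compute $(J^s)^{[\mathbf{s}]}$ (already $I=(x)$ with $\mathbf{a}=(2)$ gives $I^{[\mathbf{a}]}=(x^2)$, while that recipe returns $(x)$); the genuine compatibility between polarization and Alexander duality requires a co-exponent convention and cannot be invoked as a black box here.

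What is missing is therefore the actual content of the proof. The paper establishes $\overline{I(\HH^s)^{\{s\}}}=(J^s)^{[\mathbf{s}]}$ by a direct combinatorial argument: it writes $(J^s)^{[\mathbf{s}]}$ as the intersection, over all choices of $s$ minimal vertex covers $U_1,\dots,U_s$ of $\HH$, of the irreducible ideals $(x_i^{s+1-k_i})$ where $k_i$ counts how many $U_t$ contain $x_i$, and then shows that a monomial $M$ with all exponents at most $s$ lies outside $(J^s)^{[\mathbf{s}]}$ if and only if some standard monomial of $I(\HH^s)^{\{s\}}$ depolarizes to $M$. One direction converts the covers $U_t$ into color classes $Y\setminus U_t$, producing an $s$-coloring of an induced subhypergraph of $\HH^s$ supported on a polarization of $M$; the converse uses Theorem~\ref{standardmon} to factor a standard monomial of the join into $s$ independent sets of $\HH^s$, converts the color classes back into vertex covers of $\HH_Y$, and tracks the exponents to conclude $M\notin(J^s)^{[\mathbf{s}]}$. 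Your outline would need its first two steps replaced by an argument of this kind; only the final appeal to Theorem~\ref{ss-maintheorem} survives as stated.
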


\begin{proof} It follows from Theorem~\ref{ss-maintheorem} that
\[I(\HH^s)^{\{s\}} = (\bm_T ~\big|~ \chi(\HH^s_T) > s ).\]
Thus, it suffices to show that the depolarization of $I(\HH^s)^{\{s\}}$ equals
$(J^s)^{[{\bf s}]}$. Furthermore, it is clear from the construction of $(J^s)^{[{\bf s}]}$ that for any $i$,
the highest power of $x_i$ in any minimal generator of $(J^s)^{[{\bf s}]}$ is at most $s$. This
implies that if $x_i$ appears in a monomial $M$ with power bigger than $s$ then
$M \not\in (J^s)^{[{\bf s}]}$ if and only if $M/x_i \not\in (J^s)^{[{\bf s}]}$. Hence, it is enough to
show that a monomial $M$, in which each $x_i$ appears with power at most $s$, is not in
$(J^s)^{[\mathbf{s}]}$ if and only if there exists a standard monomial of $I(\HH^s)^{\{s\}}$ that
depolarizes to $M$.

Let $\mathcal{V}$ be the collection of all minimal vertex covers of $\HH$. Then
\[I(\HH) = \bigcap_{U \in \mathcal{V}} x_U \text{ and } J = J(\HH) = (\bm_U ~\big|~ U \in \mathcal{V} ).\]
Therefore,
$J^s = \left( \prod_{i=1}^s \bm_{U_i} ~\big|~ U_1,\dots,U_s \in \mathcal{V} \right),$ and
by Theorem \ref{generalalexanderdual} we
 have
\[ (J^s)^{[\mathbf{s}]} = \bigcap_{U_1, \dots, U_s \in \mathcal{V}} (x_i^{s+1-k_i} ~|~ x_i \text{ belongs to
exactly } k_i > 0 \text{ vertex covers among } U_1, \dots, U_s).\]

Let $M \not\in (J^s)^{[{\bf s}]}$ be a monomial in which each $x_i$ appears with power at most $s$.
It follows from the description of $(J^s)^{[{\bf s}]}$ above that there exists a collection of
vertex covers $\{U_1, \dots, U_s\} \subseteq \mathcal{V}$ such that for any $x_i \in V_\HH$,
if $x_i$ belongs to exactly $k_i > 0$ vertex covers among $\{U_1, \dots, U_s\}$ then
$x_i^{s+1-k_i} \nmid M$, i.e., the highest power of $x_i$ in $M$ is at most $s-k_i$.

Let $Y = \{ x_i ~|~ x_i \text{ divides } M\}$. Let $\tilde{M}$ be the polarization of $M$
in the polynomial ring associated to $(\HH_Y)^s$, and let
$\tilde{Y} = \{x_{ij} ~|~ x_{ij} \text{ divides } \tilde{M}\}$. Then $\bm_{\tilde{Y}} = \tilde{M}$
depolarizes to $M$. For each $t = 1, \dots, s$, let $Y_t = Y \backslash U_t$. Then $Y_t$ is an
independent set of $\HH$. Moreover, if a vertex $x_i \in V_\HH$ is not in $\bigcup_{t=1}^s Y_t$,
then either $x_i \in V_{\HH} \setminus Y$ or
$x_i$ belongs to all of the vertex covers $\{U_1, \dots, U_s\}$. In the second case, it
follows that $x_i \nmid M$ since the highest
power of $x_i$ in $M$ is at most $s-s=0$.
Thus, in both cases we have $x_i \not\in Y$. Thus, $\bigcup_{t=1}^s Y_t = Y$.

By assigning color $t$ to the vertices in $Y_t$, we color $\HH_Y$ with $s$ colors,
where the coloring has the property that a
vertex may get many colors, and if $\{x_{i_1}, \dots, x_{i_r}\}$ is an edge in $\HH_Y$, then
there does not exist any color that is assigned to all the vertices $\{x_{i_1}, \dots, x_{i_r}\}$.
Observe that a vertex $x_i \in Y$ is assigned $s-k$ colors precisely when $x_i$ is contained
in exactly $k$ sets of the collection $\{U_1, \dots, U_s\}$; this is the case when at most
$s-k$ shadows of such an $x_i$ appear in $\tilde{M}$. We shall use (a subset of) these $(s-k)$ colors to assign a color to the shadows of $x_i$ appearing in $\tilde{M}$. It is easy to see that this is an $s$-coloring for $\HH^s_{\tilde{Y}}$. Thus, $\bm_{\tilde{Y}}$ is a standard monomial of
$I(\HH^s)^{\{s\}}$.

Conversely, suppose $M$ is a monomial, in which each $x_i$ appears with power at most $s$,
such that there exists a standard monomial $N$ of $I(\HH^s)^{\{s\}}$ that depolarizes to $M$.
It follows from Theorem~\ref{standardmon} that $N$ can be written as $\bm_{T_1} \dots \bm_{T_s}$,
 where $\bm_{T_1}, \dots, \bm_{T_s}$ are standard monomials of $I(\HH^s)$. Observe that if $\bm_U$
is a standard monomial of $I(\HH^s)$, and $x_{ij} \big| \bm_U$, then by replacing $x_{ij}$ by any
other shadow $x_{il}$ of $x_i$ in $U$ not already in $U$, we still get a standard monomial of $I(\HH^s)$.
Furthermore, each $x_i$ appears in $M$ with power at most $s$, so for each $i$, $N$
contains at most $s$ shadows of $x_i$ (counted with multiplicity). Thus, by replacing a
repeated shadow $x_{ij}$ of $x_i$ by collection of distinct shadows if necessary, we may
assume that $T_1, \dots, T_s$ are pairwise disjoint.

Let $T$ be the collection of variables
that appear in $N$, and let $Y$ be the collection of variables that appear in $M$. Observe that
since each $T_i$ is an independent set in $\HH^s$, we can assign color $j$ to the vertices in
$T_j$ to get an $s$-coloring for $\HH^s_T$. For each $x_i \in Y$, let $C_i$ be the set of colors
that shadows of $x_i$ appearing in $T$ obtained. We call $C_i$ the color class of $x_i$. For
each $t = 1, \dots, s$, form the set $U_t = Y \backslash \{x_i \in Y ~\big|~ t \in C_i\}$.
Observe that if $\{x_{i_1}, \dots, x_{i_r}\}$ is an edge in $\HH_Y$ then $\{x_{i_1l_1}, \dots, x_{i_rl_r}\}$
is an edge of $\HH^s_T$ for any $1 \le l_1, \dots, l_r \le s$, and so $\bigcap_{j=1}^r C_{i_j} = \emptyset$.
This implies that for any
$1 \le t \le s$, $\{x_{i_1}, \dots, x_{i_r}\} \not\subseteq \{x_i \in Y ~|~ t \in C_i\}$.
It follows that $U_t$ is a vertex cover of $\HH_Y$ for any $t = 1, \dots, s$.
Observe further that for each $x_i \in Y$, if $x_i$ belongs to exactly $k_i$ of the
vertex covers $\{U_1, \dots, U_s\}$, then the color class $C_i$ of $x_i$ has exactly
$s-k_i$ colors. Since the set of any two distinct shadows of $x_i$ is an edge in $\HH^s$,
these shadows cannot receive the same color. Thus, there are exactly $s-k_i$ shadows of $x_i$
appearing in $T$. This implies that, in this case, the highest power of $x_i$ appearing in $M$
is exactly $s-k_i$ for any $i$. Hence, $M$ is not in $(J^s)^{[\mathbf{s}]}$.
\end{proof}

By Theorem \ref{correspondence}, the irredundant irreducible decomposition of the ideal $J^s$ corresponds to the minimal generators of $(J^s)^{[\mathbf{s}]}$,
which  are precisely the depolarizations of monomials corresponding to critically $(s+1)$-chromatic induced subhypergraphs of $\HH^s$.  Thus we can
describe the elements of $\Ass(R/J^s)$ for all $s \geq 1$.

\begin{corollary}\label{classificationofass}
Let $\HH$ be a finite simple hypergraph with cover ideal $J$.  Then
\[(x_{i_1},\ldots,x_{i_r}) \in \Ass(R/J^s)\]
if and only if there exists some set $T$ with
\[\{x_{i_1,1},\ldots,x_{i_r,1}\} \subseteq T \subseteq \{x_{i_1,1},\ldots,x_{i_1,s},\ldots,x_{i_r,1},\ldots,x_{i_r,s}\}\]
such that $\HH_T^s$ is critically $(s+1)$-chromatic.
\end{corollary}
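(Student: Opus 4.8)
The plan is to combine Lemma~\ref{localize} with Theorem~\ref{assoc-depolar} and Theorem~\ref{correspondence}. First I would reduce the question to a statement about the maximal ideal. By Lemma~\ref{localize}, the prime $P = (x_{i_1},\ldots,x_{i_r})$ lies in $\Ass(R/J^s)$ if and only if $P$, now interpreted as the maximal ideal of $k[P] = k[x_{i_1},\ldots,x_{i_r}]$, lies in $\Ass(k[P]/J(\HH_P)^s)$. So it suffices to prove: the maximal homogeneous ideal $x_V = (x_1,\ldots,x_n)$ is an associated prime of $R/J^s$ if and only if there is a set $T$ with $\{x_{1,1},\ldots,x_{n,1}\} \subseteq T \subseteq \{x_{i,j} : 1 \le i \le n,\ 1 \le j \le s\}$ (that is, $T$ contains at least one shadow of every vertex) such that $\HH^s_T$ is critically $(s+1)$-chromatic. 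The original statement then follows by applying this to $\HH_P$ in place of $\HH$ and recalling that $(\HH_P)^s = (\HH^s)_{P'}$, where $P'$ is the full set of shadows of the vertices in $P$.

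Next I would translate "$x_V \in \Ass(R/J^s)$" into a statement about the irredundant irreducible decomposition of $J^s$. The maximal ideal $x_V = \mf^{\mathbf{1}}$ appears among the components of the irredundant irreducible decomposition of $J^s$ precisely when $x_V \in \Ass(R/J^s)$; this is the standard translation between associated primes and irreducible components of a monomial ideal. By Theorem~\ref{correspondence}, the irredundant irreducible decomposition $J^s = \bigcap \mf^{\mathbf{b}}$ is indexed by the minimal generators $\mathbf{x}^{\mathbf{s}\setminus\mathbf{b}}$ of $(J^s)^{[\mathbf{s}]}$, and the component $\mf^{\mathbf{1}}$ corresponds exactly to the minimal generator whose exponent vector is $\mathbf{s} \setminus \mathbf{1} = (s,\ldots,s)$ — that is, the monomial $(x_1 \cdots x_n)^s = \mathbf{x}^{\mathbf{s}}$. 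Hence $x_V \in \Ass(R/J^s)$ if and only if $\mathbf{x}^{\mathbf{s}}$ is a minimal generator of $(J^s)^{[\mathbf{s}]}$.

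Finally I would invoke Theorem~\ref{assoc-depolar}, which identifies $(J^s)^{[\mathbf{s}]}$ as the depolarization of $I(\HH^s)^{\{s\}} = (\bm_T : \chi(\HH^s_T) > s)$. A monomial depolarizes to $\mathbf{x}^{\mathbf{s}} = (x_1 \cdots x_n)^s$ exactly when it is $\bm_T$ for a set $T$ containing every shadow $x_{ij}$; so $\mathbf{x}^{\mathbf{s}}$ is a generator of $(J^s)^{[\mathbf{s}]}$ iff some $\bm_{T'}$ with $\chi(\HH^s_{T'}) > s$ depolarizes to it, iff there is a set $T$ of shadows with $\chi(\HH^s_T) > s$, and as noted in Theorem~\ref{ss-maintheorem} the \emph{minimal} such $T$ are precisely the critically $(s+1)$-chromatic ones. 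The last piece is to check the minimality/"one shadow per vertex" bookkeeping: that $\mathbf{x}^{\mathbf{s}}$ is a \emph{minimal} generator of $(J^s)^{[\mathbf{s}]}$ forces the corresponding $T$ to be critically $(s+1)$-chromatic (otherwise we could shrink $T$ and contradict minimality, using that the highest power of $x_i$ in any generator is at most $s$), and conversely a critically $(s+1)$-chromatic $\HH^s_T$ gives a minimal generator. I expect the main obstacle to be precisely this bookkeeping step — reconciling "minimal generator of $(J^s)^{[\mathbf{s}]}$," "minimal vertex set $T$ of shadows with $\chi(\HH^s_T) > s$," and "critically $(s+1)$-chromatic," and verifying that the constraint forcing $T$ to contain at least one shadow of each surviving vertex matches the form of the component $\mf^{\mathbf{1}}$ — whereas the rest is a direct concatenation of results already in hand.
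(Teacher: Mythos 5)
There is a genuine gap, and it is exactly at the step you flagged as ``bookkeeping.'' After localizing via Lemma~\ref{localize}, you claim that $x_V\in\Ass(R/J^s)$ if and only if the component $\mf^{\mathbf{1}}=(x_1,\dots,x_n)$ (all exponents equal to $1$) occurs in the irredundant irreducible decomposition of $J^s$, equivalently if and only if $\mathbf{x}^{\mathbf{s}}=(x_1\cdots x_n)^s$ is a minimal generator of $(J^s)^{[\mathbf{s}]}$. That equivalence is false. The correct translation is: $x_V\in\Ass(R/J^s)$ if and only if \emph{some} component $\mf^{\mathbf{b}}$ with every $b_i\ge 1$ (the exponents may be anything between $1$ and $s$) occurs in the decomposition, i.e.\ if and only if $(J^s)^{[\mathbf{s}]}$ has a minimal generator $x_1^{c_1}\cdots x_n^{c_n}$ with $1\le c_i\le s$ for all $i$ --- full support, not full exponent $s$. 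The paper's own example makes this concrete: for the $6$-vertex graph with $s=3$, the maximal ideal lies in $\Ass(R/J^3)$ via the component $(x_1^3,x_2^2,x_3^3,x_4^3,x_5^3,x_6^3)$, whose dual generator is $x_1x_2^2x_3x_4x_5x_6$; since that monomial lies in $(J^3)^{[\mathbf{3}]}$ and divides $(x_1\cdots x_6)^3$, the latter is \emph{not} a minimal generator, so your criterion would wrongly exclude this associated prime. Relatedly, a generator $\mathbf{x}^{\mathbf{s}}$ would force $T$ to be the \emph{entire} shadow set, whereas the corollary only requires $T$ to contain at least one shadow of each $x_{i_j}$ --- in the example the critical set $T$ has two shadows of $x_2$ and one of each other vertex, and the full expansion $G^3_T$ on all shadows is not critically $4$-chromatic. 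So the discrepancy cannot be repaired by the shrinking argument you sketch: shrinking $T$ changes the depolarization and hence the generator, taking you outside the case you reduced to.

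The paper's proof avoids this by not localizing at all: it takes an arbitrary component $(x_{i_1}^{a_{i_1}},\dots,x_{i_r}^{a_{i_r}})$ with positive exponents, passes via Theorem~\ref{correspondence} to the minimal generator $x_{i_1}^{s+1-a_{i_1}}\cdots x_{i_r}^{s+1-a_{i_r}}$ of $(J^s)^{[\mathbf{s}]}$, applies Theorem~\ref{assoc-depolar} to get a critically $(s+1)$-chromatic $\HH^s_W$ whose monomial $\bm_W$ depolarizes to that generator, and then uses a shadow-swapping argument (replacing shadows of $x_{i_j}$ by $x_{i_j,1}$ when absent) to arrange $\{x_{i_1,1},\dots,x_{i_r,1}\}\subseteq T$; the converse reverses these steps. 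To fix your argument you would need to replace ``$\mathbf{x}^{\mathbf{s}}$ is a minimal generator'' by ``some minimal generator has full support,'' at which point you are essentially rewriting the paper's proof, including the swapping step that your sketch omits.
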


\begin{proof}
If $(x_{i_1},\ldots,x_{i_r}) \in \Ass(R/J^s)$, then
the irreducible ideal $(x_{i_1}^{a_{i_1}},\ldots,x_{i_r}^{a_{i_r}})$
with $a_{i_j} > 0$ appears in the irredundant irreducible decomposition of $J^s$.
By Theorem \ref{correspondence}, this means
that $x_{i_1}^{s+1-a_{i_1}}\cdots x_{i_r}^{s+1-a_{i_r}}$ is a minimal generator
of $(J^s)^{[{\bf s}]}$.  Hence, by Theorem \ref{assoc-depolar}
there exists a set $W \subseteq V_{\HH^s}$ such that $\HH^s_W$ is critically $(s+1)$-chromatic
and that ${\bf m}_W$ depolarizes to $x_{i_1}^{s+1-a_{i_1}}\cdots x_{i_r}^{s+1-a_{i_r}}$.  Now $W$
contains $s+1-a_{i_j}$ shadows of $x_{i_j}$ for each $j$.  If $x_{i_j,1}$ is not in $W$,
we can swap one of the other shadows of $x_{i_j}$ with $x_{i_j,1}$ to construct
a new set $W'$ which still has the property that $\HH^s_{W'}$ is critically $(s+1)$-chromatic
and such that $m_{W'}$ depolarizes to $x_{i_1}^{s+1-a_{i_1}}\cdots x_{i_r}^{s+1-a_{i_r}}$.
After all such swapping, the resulting set is the desired set $T$.

For the converse direction, because $\HH_{T}^s$ is critically $(s+1)$-chromatic,
we know that the depolarization of ${\bf m}_T$ is a minimal generator of $(J^s)^{[{\bf s}]}$.
But our condition on $T$ implies that this generator has the form
$x_{i_1}^{b_{i_1}}\cdots x_{i_r}^{b_{i_r}}$ with $1 \leq b_{i_1},\ldots,b_{i_r} \leq s$.
Hence $(x_{i_1}^{s+1-b_{i_1}},\ldots,x_{i_r}^{s+1-b_{i_r}})$ appears in the irredundant
irreducible decomposition of $J^s$ by Theorem \ref{correspondence},
and thus $(x_{i_1},\ldots,x_{i_r}) \in \Ass(R/J^s)$.
\end{proof}

By Theorem \ref{assoc-depolar}, it follows that critically $(d+1)$-chromatic
induced subhypergraphs correspond to an associated prime $P \in \Ass(R/J^d)
\setminus \bigcup_{e=1}^{d-1} \Ass(R/J^e)$.  That is,

\begin{corollary} \label{assprimesmingraphs}
Let $\HH = (V, \mathcal{E})$ be a simple hypergraph with cover ideal $J$. Suppose that the
induced hypergraph $\HH_P$ on $P \subseteq V$ is critically $(d+1)$-chromatic. Then
\begin{enumerate}
\item[(a)] $P \in \Ass(R/J^d)$, and
\item[(b)] $P \not\in \Ass(R/J^e)$ for any $1\leq e < d$.
\end{enumerate}
\end{corollary}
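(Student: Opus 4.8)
The plan is to read both statements off Corollary~\ref{classificationofass}, which characterizes membership in $\Ass(R/J^s)$ in terms of critically $(s+1)$-chromatic induced subhypergraphs of the $s$-th expansion $\HH^s$. Two elementary combinatorial inputs are needed. First, restricting $\HH^s$ to a set consisting of exactly one shadow of each vertex of $P$ — say the first shadows $\{x_{i_1,1},\dots,x_{i_r,1}\}$ — produces an induced subhypergraph isomorphic to $\HH_P$: unwinding Definition~\ref{expansion}, no clique edge $\{x_{i\ell},x_{ik}\}$ survives since only one shadow of each vertex is kept, and the surviving expanded edges are exactly the edges of $\HH_P$ with each $x_{i_j}$ replaced by $x_{i_j,1}$. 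Second, the chromatic number is monotone under passing to induced subhypergraphs (restrict a proper coloring; an edge of the subhypergraph is still an edge, hence non-monochromatic).

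For part (a), I would take $T=\{x_{i_1,1},\dots,x_{i_r,1}\}$. By the first input, $\HH^d_T\cong\HH_P$, so $\HH^d_T$ is critically $(d+1)$-chromatic, this property being preserved by isomorphism. Since $T$ trivially satisfies $\{x_{i_1,1},\dots,x_{i_r,1}\}\subseteq T\subseteq\{x_{i_j,k}\mid 1\le j\le r,\ 1\le k\le d\}$, Corollary~\ref{classificationofass} (applied with $s=d$) yields $P=(x_{i_1},\dots,x_{i_r})\in\Ass(R/J^d)$.

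For part (b), suppose for contradiction that $P\in\Ass(R/J^e)$ for some $1\le e<d$. By Corollary~\ref{classificationofass} (with $s=e$) there is a set $T$ with $\{x_{i_1,1},\dots,x_{i_r,1}\}\subseteq T\subseteq\{x_{i_j,k}\mid 1\le j\le r,\ 1\le k\le e\}$ such that $\HH^e_T$ is critically $(e+1)$-chromatic; in particular $\chi(\HH^e_T)=e+1$. Since $\{x_{i_1,1},\dots,x_{i_r,1}\}\subseteq T$, the restriction of $\HH^e$ to $\{x_{i_1,1},\dots,x_{i_r,1}\}$ is an induced subhypergraph of $\HH^e_T$, and by the first input it is isomorphic to $\HH_P$. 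Monotonicity then gives $\chi(\HH_P)\le\chi(\HH^e_T)=e+1\le d<d+1$, contradicting $\chi(\HH_P)=d+1$. Hence $P\notin\Ass(R/J^e)$ for $1\le e<d$.

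The arguments are short once Corollary~\ref{classificationofass} is available; the only step requiring any care — and the one I expect to be the (mild) main obstacle — is the identification in the first input, namely checking directly against Definition~\ref{expansion} that restricting $\HH^s$ to a single shadow of each vertex of $P$ kills every clique edge and leaves precisely the edge set of $\HH_P$. This is purely a matter of unwinding the definition of the expansion, so I would state it explicitly and verify it before invoking it in both parts.
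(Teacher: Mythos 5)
Your proposal is correct and follows essentially the same route as the paper, which states Corollary~\ref{assprimesmingraphs} as an immediate consequence of Theorem~\ref{assoc-depolar} via Corollary~\ref{classificationofass}: one copy of $\HH_P$ sits inside $\HH^s$ as the induced subhypergraph on a single shadow of each vertex, giving (a) with $s=d$, and the forward direction of Corollary~\ref{classificationofass} together with monotonicity of the chromatic number under induced subhypergraphs rules out $e<d$, giving (b). Your explicit verification that restricting $\HH^s$ to one shadow per vertex kills the clique edges and reproduces exactly the edges of $\HH_P$ is precisely the (unstated) identification the paper relies on, so there is no gap.
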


Recall that we say an ideal $I$ has the {\bf persistence property for 
associated primes} if $\Ass(R/I^s) \subseteq \Ass(R/I^{s+1})$ for all $s \geq 1$.
Computer experiments suggest that Corollary \ref{assprimesmingraphs} (a)
can be strengthened to say that $P \in \Ass(R/J^t)$ for all $t \geq d$.  In
a sequel \cite{FHVT2}, we have stated a graph theoretic
conjecture that would imply that $J(G)$ has the persistence property for
simple graphs $G$.  As further evidence, we show that
in the case of simple graphs, the prime ideals corresponding
to certain critically chromatic subgraphs persist.

We have already introduced the notion of an odd hole.  An odd
antihole is the complement of an odd hole.
A graph $G$ is a {\bf complete graph of order $n$}, denoted by $\K_n$, if $|V_G| = n$
and $\mathcal{E}_G = \{x_ix_j ~|~ 1 \leq i < j \leq n \}$.  For a subset $S \subseteq V_G$,
we call an induced subgraph $G_S$ a {\bf clique} if $G_S = \K_{|S|}$.  

\begin{corollary} \label{assprimescliques}
Let $G$ be a simple graph with cover ideal $J = J(G)$. Suppose that 
induced graph on $P \subseteq V$
is a clique, an odd hole, or an odd antihole.  Let $d+1 = \chi(G_P)$.  Then
\begin{enumerate}
\item[(a)] $P \in \Ass(R/J^t)$ for any $t \geq d$.
\item[(b)] $P \not\in \Ass(R/J^e)$ for any $1\leq e < d$.
\end{enumerate}
\end{corollary}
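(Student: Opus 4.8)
The plan is to get part (b), and simultaneously the case $t=d$ of part (a), directly from Corollary~\ref{assprimesmingraphs}, and then to settle part (a) for $t>d$ by producing, via Corollary~\ref{classificationofass}, an explicit critically $(t+1)$-chromatic induced subgraph of the $t$-th expansion of $G_P$. The first step is the observation that each of the three graphs is critically chromatic: $\K_r$ is critically $r$-chromatic, an odd hole is critically $3$-chromatic, and an odd antihole $\overline{C_{2k+1}}$ is critically $(k+1)$-chromatic, since deleting a vertex leaves the complement $\overline{P_{2k}}$ of a path, which is $k$-colorable (color the two ends of each path-edge alike). As $d+1=\chi(G_P)$, part (b) is then precisely Corollary~\ref{assprimesmingraphs}(b), and the case $t=d$ of part (a) is Corollary~\ref{assprimesmingraphs}(a). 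So the remaining task is part (a) for $t>d$.

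For this, by Corollary~\ref{classificationofass} it suffices to exhibit, for each $t>d$, a vertex set $T$ of $(G_P)^t$ containing between $1$ and $t$ shadows of each vertex of $P$ such that $(G_P)^t_T$ is critically $(t+1)$-chromatic; note $(G_P)^t_T$ depends only on the induced graph $G_P$. A choice of such a $T$ is nothing but a choice of multiplicities $c_i\in\{1,\dots,t\}$ for the vertices $x_i$ of $G_P$, namely $c_i=$ (number of shadows of $x_i$ in $T$): then $(G_P)^t_T$ is the blow-up of $G_P$ obtained by replacing $x_i$ by a clique of size $c_i$, and, since lowering a $c_i$ cannot increase the chromatic number, $(G_P)^t_T$ is critically $(t+1)$-chromatic exactly when its chromatic number is $t+1$ and every unit decrease of a $c_i$ yields chromatic number $\le t$. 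It then remains to write down a working multiplicity vector in each of the three cases.

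For a clique $\K_r=G_P$ (so $d=r-1$) one has $\K_r^t=\K_{rt}$, so any $(t+1)$-subset $T$ of $V(\K_{rt})$ meeting all $r$ classes (possible since $t+1>r$) gives $(\K_{rt})_T=\K_{t+1}$, which is patently critically $(t+1)$-chromatic. For an odd hole $C_{2k+1}=G_P$ (so $d=2$), set $c_i=\lceil t/2\rceil$ on $x_1,x_3,\dots,x_{2k-1}$, $c_i=\lfloor t/2\rfloor$ on $x_2,x_4,\dots,x_{2k}$, and $c_{2k+1}=1$: every cyclically consecutive pair then sums to $\le t$ while $\sum_i c_i=kt+1$, so, recalling that the chromatic number of a clique-blow-up of an odd cycle equals $\max\!\big(\max_i(c_i+c_{i+1}),\,\lceil(\sum_i c_i)/k\rceil\big)$, the blow-up has chromatic number $\max(t,t+1)=t+1$, while any unit decrease leaves total $kt$ (or deletes $x_{2k+1}$, leaving a path) with consecutive sums still $\le t$, hence chromatic number $\le t$. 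For an odd antihole $\overline{C_{2k+1}}=G_P$ with $k\ge3$ (the case $k=2$ being $C_5$, already treated; so $d=k$), load the surplus onto a single edge of $C_{2k+1}$ by setting $c_1=c_2=t+1-k$ and $c_i=1$ for $3\le i\le2k+1$, so $\sum_i c_i=2t+1$. Since $x_1x_2$ is an edge of $C_{2k+1}$, no clique of $\overline{C_{2k+1}}$ — that is, no independent set of $C_{2k+1}$ — contains both, so every clique of the blow-up has at most $(t+1-k)+(k-1)=t$ vertices; on the other hand $\alpha(\overline{C_{2k+1}})=2$, so the $2t+1$ vertices of the blow-up force at least $\lceil(2t+1)/2\rceil=t+1$ colors, and a coloring with exactly $t+1$ colors is obtained from $t+1-k$ color classes equal to $\{x_1,x_2\}$ together with $k$ classes properly coloring the induced copy of $\overline{P_{2k-1}}$ on $\{x_3,\dots,x_{2k+1}\}$; any unit decrease drops the total to $2t$, keeps all clique sizes $\le t$, and when a multiplicity reaches $0$ replaces $\overline{C_{2k+1}}$ by the perfect graph $\overline{P_{2k}}$, so the chromatic number drops to $\le t$.

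The routine part is the collection of coloring estimates for these blown-up graphs — clique and fractional (independence-number) lower bounds, and matching colorings, invoking for the antiholes only the elementary fact that complements of paths are perfect, so that there $\chi=\omega$. I expect the odd-antihole case to be the main obstacle, because one must control \emph{every} clique of $\overline{C_{2k+1}}$ at once; the device that makes it work is to concentrate the surplus multiplicity on a single edge of $C_{2k+1}$, whose two endpoints never lie together in a clique of the complement.
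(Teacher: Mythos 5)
Your overall route is the paper's: part (b) and the case $t=d$ come from Corollary~\ref{assprimesmingraphs} (after checking that cliques, odd holes and odd antiholes are critically chromatic), and for $t>d$ you feed an explicit critically $(t+1)$-chromatic induced subgraph of the expansion into Corollary~\ref{classificationofass}. In fact your clique and antihole multiplicity vectors coincide with the paper's choices ($t-d+1$ shadows on one vertex of the clique; $t+1-d$ shadows on each endpoint of one edge of $C_{2k+1}$ for the antihole), and your odd-hole weighting is a balanced variant of the paper's ``$t-1$ on alternate vertices, $1$ elsewhere'' pattern. The paper leaves the criticality checks to the reader, and two of yours are not airtight as written.

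First, the odd-hole case rests entirely on the quoted formula $\chi=\max\bigl(\max_i(c_i+c_{i+1}),\lceil(\sum_i c_i)/k\rceil\bigr)$ for clique blow-ups of $C_{2k+1}$. The formula is true, but it is a genuine theorem about multicoloring odd cycles rather than a routine estimate: the upper bound (in particular that $t$ colors suffice after a unit decrease, where the naive alternating coloring gets stuck at the odd closing edge and a cascading recoloring around the cycle is needed) requires a citation or a short proof, and you give neither. Second, and more seriously, your antihole criticality check is invalid as stated in the sub-case where the deleted shadow belongs to $x_1$ or $x_2$: there the multiplicity stays positive, the base graph is still the imperfect graph $\overline{C_{2k+1}}$, and ``total $2t$, all cliques of size $\le t$'' are lower-bound-type quantities that do not bound $\chi$ from above. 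The conclusion is nonetheless correct, and the repair is the coloring you already wrote down: pair the remaining $t-k$ shadows of $x_1$ with shadows of $x_2$ (using $t-k$ colors), then color the leftover shadow of $x_2$ together with $x_3,\dots,x_{2k+1}$, which induce the perfect graph $\overline{P_{2k}}$, with $k$ more colors, for a total of $t$. With these two repairs (and the trivial remark that for $t=3$ an even-indexed weight in the hole case can also drop to $0$, covered by the same path-blow-up observation), your argument is complete and is essentially the proof in the paper.
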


\begin{proof}
Localize at the clique, odd hole, or odd antihole, so $P= (x_1,\ldots,x_n)$.  
Part (b) follows immediately from 
Corollary~\ref{assprimesmingraphs}. 
If $G_P$ is a clique with $d+1 = \chi(G_P)$, then $G_P$ is a clique
of size $d+1$.  So, $P = (x_1,\ldots,x_{d+1})$.  For each $t \geq d$,
the induced graph on $\{x_{1,1},\ldots,x_{1,t-d+1},x_{2,1},\ldots,x_{{d+1},1}\}$
in $G^t$ is a clique of size $t+1$.  This clique is a critically
$(t+1)$-chromatic graph, so by Corollary \ref{classificationofass}, 
$(x_{1},\ldots,x_{d+1}) \in \Ass(R/J^t)$.

If $G_P$ is an odd hole, then $d=2$, and $G_P$ is a graph on $2m+1$
vertices.  Now use Corollary \ref{classificationofass}
and the induced subgraph of $G^t$ comprised of $t-1$ 
shadows of each of $x_1$, $x_3, \dots, x_{2m-1}$ and one shadow of each of
$x_2$, $x_4,\dots,x_{2m}$, and $x_{2m+1}$. 
If $G_P$ is an odd antihole with $\chi(G_P) = d+1$, then $G_P$
has $2d+1$ vertices.  Use the induced subgraph of $G^t$ comprised of $t+1-d$ shadows of each of $x_1$ 
and $x_2$ and one shadow of every other vertex.
\end{proof}

\begin{remark} \label{r.ann}
One can also prove Corollary~\ref{assprimescliques} without using the machinery of this section. Minimal vertex covers of cliques, odd holes, and odd antiholes have enough structure that for each $t$, one can explicitly construct monomials in $R/J^t$ for each type of graph that have the appropriate prime ideals as their annihilators.
\end{remark}

We also want to know when the sets $\Ass(R/J^s)$ stabilize. 
We give an exact answer for perfect graphs in the next section, but
Corollary \ref{assprimesmingraphs} also gives a lower bound:

\begin{corollary}\label{stablebound}
Let $\HH$ be a finite simple hypergraph with cover ideal $J$. If $a$ is the minimal
integer such that \[\bigcup_{s=1}^\infty \Ass(R/J^s) = \bigcup_{s=1}^a \Ass(R/J^s),\]
then $a \ge \chi(\HH)-1$.
\end{corollary}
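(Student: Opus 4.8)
The plan is to exhibit an associated prime of some power of $J$ that makes its first appearance at exactly the power $d-1$, where $d=\chi(\HH)$, and then to read off the bound from the definition of $a$.

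First I would set $d=\chi(\HH)$ and pass to a critically $d$-chromatic induced subhypergraph of $\HH$. Note that the standing hypotheses on $\HH$ (no isolated vertices, every edge of cardinality at least two) force $\HH$ to have at least one edge, so $d\ge 2$ and $d-1\ge 1$. Starting from $P_0=V_\HH$, repeatedly delete from the current vertex set any vertex whose removal still leaves the chromatic number equal to $d$; since $V_\HH$ is finite this process terminates at a set $P\subseteq V_\HH$ with $\chi(\HH_P)=d$ and $\chi(\HH_P\setminus\{x\})<d$ for every $x\in P$. Thus $\HH_P$ is critically $d$-chromatic.

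Next I would apply Corollary~\ref{assprimesmingraphs} to $\HH_P$ (with the corollary's ``$d+1$'' equal to our $d$): this yields $P\in\Ass(R/J^{d-1})$ and $P\notin\Ass(R/J^e)$ for every $e$ with $1\le e<d-1$. In particular $P\in\bigcup_{s=1}^\infty\Ass(R/J^s)$. Now invoke the defining property of $a$: since $\bigcup_{s=1}^a\Ass(R/J^s)=\bigcup_{s=1}^\infty\Ass(R/J^s)$ contains $P$, there is an $s$ with $1\le s\le a$ and $P\in\Ass(R/J^s)$; but the non-membership statement above forces $s\ge d-1$, so $a\ge s\ge d-1=\chi(\HH)-1$.

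The deduction is short, and there is no genuine obstacle once Corollary~\ref{assprimesmingraphs} is available; the only step requiring any thought is the construction of the critically $d$-chromatic induced subhypergraph, which is the routine greedy vertex-deletion argument indicated above.
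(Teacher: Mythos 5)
Your argument is correct and is exactly the route the paper intends: the paper derives Corollary~\ref{stablebound} directly from Corollary~\ref{assprimesmingraphs}, and your proof simply fills in the (routine) step of extracting a critically $\chi(\HH)$-chromatic induced subhypergraph by greedy vertex deletion before invoking parts (a) and (b) of that corollary. No gaps; the deduction $a \ge \chi(\HH)-1$ from the first appearance of the corresponding prime at power $\chi(\HH)-1$ is exactly as in the paper.
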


This bound, however, is not optimal. We have the natural question:

\begin{question}  For each integer $n \geq 0$, does there exist a hypergraph $\HH_n$
such that the stabilization of associated primes occurs at $a \geq (\chi(\HH_n) -1) +n$?
\end{question}

There are straightforward three-chromatic examples for $n=1$ (see below) and 
$n=2$ (a graph with nine vertices, including an induced seven-cycle). We 
have also found a more complicated three-chromatic example with $n=3$ that 
involves 12 vertices and 22 edges.  However, we do not know what happens for $n \geq 4$.

\begin{example}
We illustrate some of the ideas of this section.
Consider the following graph $G$ which has $\chi(G) = 3$.
\[
\begin{picture}(110,60)(0,-10)
\put(50,0){\circle*{5}}
\put(51,-9){$x_6$}
\put(0,20){\circle*{5}}
\put(-11,24){$x_5$}
\put(50,20){\circle*{5}}
\put(48,24){$x_4$}
\put(100,20){\circle*{5}}
\put(102,24){$x_3$}
\put(75,40){\circle*{5}}
\put(74,44){$x_2$}
\put(25,40){\circle*{5}}
\put(24,44){$x_1$}
\put(50,0){\line(5,2){50}}
\put(0,20){\line(5,-2){50}}
\put(0,20){\line(1,0){100}}
\put(25,40){\line(1,0){50}}
\put(0,20){\line(5,4){25}}
\put(75,40){\line(5,-4){25}}
\put(50,0){\line(0,1){20}}
\end{picture}\]
Using \emph{Macaulay 2}, we compute $\Ass(R/J^s)$ for $s=1,2,3$;
in particular,
\[\mathfrak{m} = (x_1,\dots,x_6) \in \Ass(R/J^3) \setminus  (\Ass(R/J) \cup \Ass(R/J^2)).\]
The induced graph on $\mathfrak{m}$ is not a critically 4-chromatic graph,
so the existence of $\mathfrak{m}$ is not explained by Corollary \ref{assprimesmingraphs}.
However, by Corollary \ref{classificationofass}, there must be some set
\[\{x_{1,1},\ldots,x_{6,1}\} \subseteq T \subseteq \{x_{1,1},x_{1,2},x_{1,3},\ldots,
x_{6,1},x_{6,2},x_{6,3}\}\]
such that $G^3_T$ is a critically 4-chromatic graph.  Indeed, the required set is
\[T = \{x_{1,1},x_{2,1},x_{2,2},x_{3,1},x_{4,1},x_{5,1},x_{6,1}\}.\]
This set $T$ can be found by computing the irredundant irreducible
decomposition of $J^3$, and observing that the irreducible ideal
$(x_1^3,x_2^2,x_3^3,x_4^3,x_5^3,x_6^3)$ appears in
the decomposition.
Observe that this example also shows that the bound in 
Corollary \ref{stablebound} can be strict.  In this case $a \geq 3 > \chi(G) - 1$.
\end{example}


\section{Application: algebraic characterizations of perfect graphs} \label{s.algchar}

In this section we specialize to that case that $\HH = G$ is a finite simple
graph.  We will use the results of the previous sections to give
some new  algebraic characterizations of perfect graphs.  We begin by introducing
perfect graphs.

We denote the
size of the largest clique of $G$ by $\omega(G)$.
Clearly, $\chi(\K_n) = n$, and thus we always have $\chi(G) \geq \omega(G)$.

\begin{definition} \label{perfectdef}
A graph $G = (V_G, E_G)$ is {\bf perfect} if for every induced subgraph $G_S$, with $S \subseteq V_G$, we have
$\chi(G_S) = \omega(G_S)$.
A graph $G$ is said to be a {\bf minimal imperfect graph} if $G$ is imperfect, but for
any vertex $x \in V$, $G\setminus \{x\}$ is a perfect graph.
\end{definition}

\begin{remark}
By the Strong Perfect Graph Theorem \cite{CRST}, the only minimal imperfect graphs are the odd holes
and odd antiholes (complements of odd holes). Our aim is to give
new algebraic characterizations of perfect graphs that avoid the use
of this theorem.
\end{remark}

It is straightforward to show that a minimal imperfect graph $G$ is
 a critically $\chi(G)$-chromatic graph.  Using
Corollary \ref{assprimesmingraphs}, one can show:

\begin{lemma}\label{minimalimperfect}
Suppose that $G$ is a minimal imperfect graph on $V = \{x_1,\dots,x_n\}$ with cover ideal $J$.
Then $(x_1,\dots,x_n) \in \Ass(R/J^{\chi(G)-1})$.
\end{lemma}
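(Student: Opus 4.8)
The plan is to reduce the statement to Corollary~\ref{assprimesmingraphs} by first checking that a minimal imperfect graph is critically $\chi(G)$-chromatic, and then applying that corollary with $d = \chi(G)-1$ and $P = V$.

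First I would establish that $\chi(G) > \omega(G)$. Since $G$ is imperfect, there is an induced subgraph $G_S$ with $\chi(G_S) > \omega(G_S)$. If $S$ were a proper subset of $V$, then $G_S$ would be an induced subgraph of $G \setminus \{x\}$ for any $x \in V \setminus S$, contradicting the fact that $G \setminus \{x\}$ is perfect. Hence $S = V$, so $\chi(G) > \omega(G)$.

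Next I would show that $\chi(G \setminus \{x\}) < \chi(G)$ for every vertex $x \in V$. Because $G \setminus \{x\}$ is perfect, $\chi(G \setminus \{x\}) = \omega(G \setminus \{x\}) \le \omega(G) \le \chi(G) - 1 < \chi(G)$, where the first inequality is clear and the second uses $\chi(G) > \omega(G)$ from the previous step together with the general bound $\chi \ge \omega$ recalled just before Definition~\ref{perfectdef}. Combined with the first step, this is exactly the statement that $G$ is critically $\chi(G)$-chromatic.

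Finally, I would apply Corollary~\ref{assprimesmingraphs}(a) to the hypergraph $\HH = G$ with $P = V = \{x_1,\dots,x_n\}$: the induced subhypergraph $G_P = G$ is critically $(d+1)$-chromatic with $d+1 = \chi(G)$, hence $P = (x_1,\dots,x_n) \in \Ass(R/J^d) = \Ass(R/J^{\chi(G)-1})$, as desired. There is no genuine obstacle in this argument; the only point requiring slight care is the first step, where one must observe that the imperfection witness $G_S$ can be taken with $S = V$, which is immediate from the minimality hypothesis.
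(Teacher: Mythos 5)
Your proposal is correct and follows exactly the route the paper intends: it verifies the paper's remark that a minimal imperfect graph is critically $\chi(G)$-chromatic (via $\chi(G)>\omega(G)$ and perfection of each $G\setminus\{x\}$) and then applies Corollary~\ref{assprimesmingraphs}(a) with $d+1=\chi(G)$ and $P=V$. No gaps.
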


Perfection is preserved when passing to the expansion:

\begin{lemma} \label{expansion-perfect}
If $G$ is a perfect graph, then $G^s$ is perfect for any integer $s \geq 1$.
\end{lemma}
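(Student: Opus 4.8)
The plan is to recognize the $s$-th expansion $G^s$ as the graph obtained from $G$ by substituting a complete graph $K_s$ for each vertex, and then to invoke the classical fact that the class of perfect graphs is closed under this operation. Recall the \emph{elementary replication} of a vertex $v$ in a graph $\Gamma$: one adjoins a new vertex $v'$, joins $v'$ to $v$ and to every neighbor of $v$, and changes nothing else. The first step is the purely combinatorial observation that $G^s$ arises from $G$ by a finite sequence of elementary replications. Fixing the order ``replicate $x_1$ repeatedly, then $x_2$, and so on,'' an easy induction shows that after performing $s-1$ replications within the cluster of $x_i$ (always replicating the surviving representative $x_i$), the vertices $x_{i1},\dots,x_{is}$ form a clique all of whose members have the same external neighborhood, which is precisely the adjacency prescribed in Definition \ref{expansion}; in particular the result of the full sequence of $n(s-1)$ replications is exactly $G^s$. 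Equivalently, $G^s = G[K_s,\dots,K_s]$, the substitution of a copy of $K_s$ into every vertex of $G$.

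The second step is to apply Lov\'asz's \emph{Replication Lemma}: if $\Gamma$ is perfect and $\Gamma'$ is obtained from $\Gamma$ by one elementary replication, then $\Gamma'$ is perfect. This lemma is part of Lov\'asz's original proof of the Weak Perfect Graph Theorem; it predates the Strong Perfect Graph Theorem and is logically independent of it, so its use is consistent with the stated goal of avoiding the SPGT. Starting from the perfect graph $G$ and applying the Replication Lemma once for each of the $n(s-1)$ replications produced in the first step, induction on the number of replications performed yields that $G^s$ is perfect.

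The only delicate point is the bookkeeping in the first step, namely checking that iterated replication reproduces exactly the ``clique-plus-common-neighborhood'' structure of $G^s$; this is elementary but must be written out carefully. If one prefers a self-contained argument that does not cite the Replication Lemma, one can instead observe that an arbitrary induced subgraph of $G^s$ is a blow-up of an induced subgraph $G_Y$ of $G$ in which each $x_i$ is replaced by a clique of some size $a_i \ge 0$; for such a blow-up $H$ one has $\omega(H) = \max\{\sum_{i\in Q} a_i : Q \text{ a clique of } G_Y\}$, while $\chi(H)$ is the least total number of colors in an assignment of an $a_i$-element set of colors to each $x_i$ with adjacent vertices getting disjoint sets. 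One then needs the weighted form of perfection for $G_Y$, i.e.\ that these two quantities coincide; this is where the real work would lie, and it can be deduced from Lov\'asz's characterization that $G$ is perfect if and only if every induced subgraph $H$ satisfies $\alpha(H)\,\omega(H) \ge |V(H)|$. Either route suffices, and I would present the first, as it is shortest.
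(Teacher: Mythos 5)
Your proposal is correct and takes essentially the same route as the paper: the paper simply cites the classical fact (Bollob\'as, Theorem 19) that substituting perfect graphs for the vertices of a perfect graph yields a perfect graph, applied with cliques, while you realize the same clique-substitution $G^s$ by iterating Lov\'asz's Replication Lemma, which is the standard proof of that closure property and is equally independent of the SPGT. No gaps.
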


\begin{proof}
By \cite[Theorem 19]{Bollobas}, if $G$ is perfect, then replacing the vertices of $G$ by
perfect graphs yields another perfect graph. To construct $G^s$ from $G$, we are replacing
each vertex of $G$ by a clique, and cliques are perfect.
\end{proof}

We next describe a large families of prime ideals $P$ that {\it cannot}
appear as associated primes by showing that the induced
graph $G_P$ has a property that prevents it from contributing
an associated prime.  Although we shall only need
a special case of this result, we prove a much more general result
since it seems of independent interest.
In what follows $N_G(x) = \{y \in V ~|~ \{x,y\} \in \E_G\}$ denotes
the {\bf set of neighbors} of $x$.

\begin{definition}
A vertex $x \in V_G$ is a {\bf simplicial vertex} of $G$ if
the induced graph on  $N_G(x)$ is a clique.  
This also implies that the induced graph on $\{x\} \cup N_G(x)$ is a clique.
\end{definition}

We need a lemma before proving our theorem about graphs with a simplicial vertex.

\begin{lemma}  \label{monomialannihilator}
Let $G$ be a finite simple graph on $V = \{x_1,\dots,x_n\}$ with cover ideal $J$.
Suppose that $(x_1,\dots,x_n) \in \Ass(R/J^d)$. If $T$ is such that $J^d:T = (x_1,\dots,x_n)$,
then $T \mid (\bm_V)^{d-1}$.
\end{lemma}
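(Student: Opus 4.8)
We have $(x_1,\dots,x_n)\in\Ass(R/J^d)$ with $J^d:T=(x_1,\dots,x_n)$ for some monomial $T$, and we want $T\mid (\bm_V)^{d-1}$, i.e. every variable $x_i$ appears in $T$ with exponent at most $d-1$.

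\textbf{Proof proposal.} The plan is to argue by contradiction through a direct exponent count, using only that a monomial lying in $J^d$ must be divisible by a product $m_1\cdots m_d$ of (necessarily squarefree) minimal generators of $J$, and that $J^d:T=(x_1,\dots,x_n)$ forces $T\notin J^d$.

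First I would record that $T\notin J^d$, since otherwise $J^d:T=R\neq(x_1,\dots,x_n)$. Now suppose, for contradiction, that the conclusion fails, so some variable — say $x_1$ — appears in $T$ with exponent at least $d$, i.e.\ $x_1^d\mid T$. Because $x_1\in(x_1,\dots,x_n)=J^d:T$, we have $x_1T\in J^d$, and hence $x_1T=m_1\cdots m_dM$ for some minimal generators $m_1,\dots,m_d$ of $J$ and some monomial $M\in R$. Each $m_i$ is the monomial $\bm_{W_i}$ of a minimal vertex cover $W_i$, so each $m_i$ is squarefree; therefore the exponent of $x_1$ in $m_1\cdots m_d$ is at most $d$. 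On the other hand, the exponent of $x_1$ in $x_1T$ is at least $d+1$. Comparing exponents forces $x_1\mid M$, and then
\[
T=\frac{x_1T}{x_1}=m_1\cdots m_d\left(\frac{M}{x_1}\right)\in J^d,
\]
contradicting $T\notin J^d$. The same argument applies to every variable, so $x_i$ occurs in $T$ with exponent at most $d-1$ for each $i$, i.e.\ $T\mid(\bm_V)^{d-1}$.

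I do not expect a genuine obstacle here: the argument is essentially a one-line pigeonhole on the exponent of a single variable. The only points that need to be stated carefully are that the minimal generators of the cover ideal $J$ are squarefree (so each contributes at most $1$ to the exponent of any given variable in a product of $d$ of them) and that membership of a monomial in $J^d$ means divisibility by such a product of $d$ generators; both are standard facts about monomial ideals already available in the paper.
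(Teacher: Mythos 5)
Your proposal is correct and follows essentially the same argument as the paper: assume some variable has exponent at least $d$ in $T$, multiply by that variable, write the result as a product of $d$ squarefree generators of $J$ times a monomial, and conclude by an exponent count that the extra variable must divide the leftover factor, forcing $T \in J^d$, a contradiction. No differences worth noting.
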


\begin{proof}
If $T \nmid (\bm_V)^{d-1}$, then there exists some $x_i$ in $T$ whose exponent is at least $d$.
Thus, the exponent of $x_i$ in $Tx_i$ is at least $d+1$.  Because $Tx_i \in J^d$, there exist
$\bm_{W_1},\dots,\bm_{W_d} \in J$ such that
$Tx_i = \bm_{W_1}\cdots \bm_{W_d}N $ for some monomial $N$. Because each $\bm_{W_j}$ is squarefree,
the exponent of $x_i$ in $\bm_{W_j}$ is at most one. So $x_i\mid N$, whence
$T = \bm_{W_1}\cdots \bm_{W_d}(\frac{N}{x_i}) \in J^d$, a contradiction since $T \not\in J^d$.
\end{proof}

\begin{theorem} \label{simplicialvertex}
Suppose that $G$ has a simplicial vertex
$x \in V$ with $N_G(x) = \{x_2,\ldots,x_r\}$.
Let $P \in \Ass(R/J^s)$.  Then
\[x \in P ~~\mbox{if and only if}~~ P = (x,x_{i_1},\ldots,x_{i_d})\]
with $\{x_{i_1},\ldots,x_{i_d}\}$ any subset of $\{x_2,\ldots,x_r\}$
of size $1 \leq d \leq \min\{r-1,s\}$.
\end{theorem}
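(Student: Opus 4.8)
The plan is to handle the two implications separately, pushing all of the combinatorics into the expansion $G^s$ via Corollary~\ref{classificationofass}.

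For the direction ($\Leftarrow$), suppose $P=(x,x_{i_1},\dots,x_{i_d})$ with $\{x_{i_1},\dots,x_{i_d}\}\subseteq\{x_2,\dots,x_r\}$ and $1\le d\le\min\{r-1,s\}$. That $x\in P$ is immediate; to see moreover that every such $P$ genuinely lies in $\Ass(R/J^s)$ (so that the list is exhaustive), note that since $x$ is simplicial the set $\{x\}\cup N_G(x)=\{x,x_2,\dots,x_r\}$ induces the clique $\K_r$, hence $G_P$ is the clique $\K_{d+1}$ and $\chi(G_P)=d+1$. As $d\le s$, Corollary~\ref{assprimescliques} gives $P\in\Ass(R/J^t)$ for all $t\ge d$, in particular for $t=s$.

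For the substantive direction ($\Rightarrow$), assume $P\in\Ass(R/J^s)$ and $x\in P$; relabel so that $x=x_1$ and $P=(x_1,x_{i_1},\dots,x_{i_d})$. By Corollary~\ref{classificationofass} there is a set $T$ with $\{x_{1,1},x_{i_1,1},\dots,x_{i_d,1}\}\subseteq T$ contained in the set of shadows of the generators of $P$, such that $G^s_T$ is critically $(s+1)$-chromatic. The key point is that simpliciality passes to the expansion: since $N_G(x_1)$ is a clique, the neighbours of the shadow $x_{1,1}$ in $G^s$ — the remaining shadows of $x_1$ together with all shadows of vertices of $N_G(x_1)$ — again form a clique, so $x_{1,1}$ is a simplicial vertex of $G^s$, and therefore of the induced subgraph $G^s_T$. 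Now in a critically $(s+1)$-chromatic graph any simplicial vertex $v$ has $\deg(v)\ge s$: by criticality $G^s_T\setminus\{v\}$ admits an $s$-colouring, the neighbours of $v$ form a clique and so receive $\deg(v)$ distinct colours, and if $\deg(v)\le s-1$ there would be a colour free at $v$, extending the colouring to all of $G^s_T$ and contradicting $\chi(G^s_T)=s+1$. Hence $\{x_{1,1}\}\cup N_{G^s_T}(x_{1,1})$ is a clique on at least $s+1$ vertices; since $\omega(G^s_T)\le\chi(G^s_T)=s+1$ it has exactly $s+1$ vertices, and as $G^s_T$ is vertex-critical it can have no vertex outside this clique (such a vertex could be deleted without lowering $\chi$). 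Therefore $G^s_T=\K_{s+1}$.

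It remains to read off the statement. Since $|T|=s+1$ and $T$ contains the distinct shadows $x_{1,1},x_{i_1,1},\dots,x_{i_d,1}$, we get $d+1\le s+1$, i.e. $d\le s$. Because $G^s_T$ is a clique, $x_{1,1}$ is adjacent in $G^s$ to each $x_{i_j,1}$, and as these are shadows of distinct vertices this forces $\{x_1,x_{i_j}\}\in\E_G$, so each $x_{i_j}\in N_G(x_1)=\{x_2,\dots,x_r\}$ and hence $d\le r-1$ as well; also $d\ge1$, for if $d=0$ then $T$ would consist only of shadows of $x_1$, which induce a clique on at most $s$ vertices, contradicting $\chi(G^s_T)=s+1$. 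This is precisely the asserted form of $P$. I expect the only real subtlety to be the pair of observations that the shadow of a simplicial vertex is again simplicial and that a vertex-critical graph containing a clique on $\chi$ vertices must equal it; a more computational alternative would be to localize via Lemma~\ref{localize} to the case $P=\mf$, then use Lemma~\ref{monomialannihilator} and the fact that each minimal vertex cover of $G$ either contains $x_1$ or contains all of $N_G(x_1)$, but the expansion argument is shorter.
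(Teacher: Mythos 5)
Your proof is correct, but the hard direction is argued along a genuinely different route from the paper's. The paper proves ($\Rightarrow$) by localizing via Lemma~\ref{localize}, splitting into two cases, and disposing of the essential case (a generator $w\notin N_G(x)\cup\{x\}$) by a hands-on colon-ideal computation: it counts the total degree of the variables of $N_G(x)$ in two factorizations of $Tw$ and $Tx$ into minimal vertex covers, invokes Lemma~\ref{monomialannihilator} to bound exponents, and then swaps a variable between factors to exhibit $T\in J^s$, a contradiction. You instead stay entirely inside the expansion machinery: Corollary~\ref{classificationofass} hands you a set $T$ of shadows of the generators of $P$ with $G^s_T$ critically $(s+1)$-chromatic, and two clean combinatorial facts --- that a shadow of a simplicial vertex is again simplicial in $G^s$ (hence in $G^s_T$), and that a critically $(s+1)$-chromatic graph containing a simplicial vertex must be $\K_{s+1}$ --- force $G^s_T=\K_{s+1}$, from which the form of $P$ and the bounds $1\le d\le\min\{r-1,s\}$ (including the exclusion of $P=(x)$) fall out by reading adjacencies back through depolarization; your ($\Leftarrow$) direction coincides with the paper's use of Corollaries~\ref{assprimesmingraphs} and~\ref{assprimescliques}. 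What each approach buys: yours is shorter, avoids the localization and the delicate monomial bookkeeping, and yields the extra structural fact that any witness $T$ for such a prime induces a clique in $G^s$; the paper's argument is more elementary in that the crucial Case 2 needs only the definition of associated primes and vertex covers rather than the full strength of Theorem~\ref{assoc-depolar}, and its swapping technique is the kind of direct annihilator construction alluded to in Remark~\ref{r.ann}. There is no circularity in your use of Corollary~\ref{classificationofass}, since it is established before and independently of this theorem.
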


\begin{proof}
($\Leftarrow$)  Let $\{x_{i_1},\ldots,x_{i_d}\}$ be any subset of size $1 \leq d \leq
\min\{r-1,s\}$ of $\{x_2,\ldots,x_r\}$.  Then the induced graph $G_P$ on $P = \{x,x_{i_1},\ldots,x_{i_d}\}$
is a clique of size $d+1 = \min\{r,s+1\} \leq s+1$.  Since a clique
of size $d+1$ is critically $(d+1)$-chromatic, we have $P \in \Ass(R/J^{d})$ by
Corollary  \ref{assprimesmingraphs}. But then by Corollary~\ref{assprimescliques}, 
we have $P \in \Ass(R/J^s)$ since $d \leq s$.

($\Rightarrow$)  Suppose that $x \in P \in \Ass(R/J^s)$.  We first observe that $P \neq (x)$
since this would mean that $(x)$ is a minimal associated prime of $R/J^s$,
but the minimal associated primes are the height two prime ideals
which correspond to the edges of $G$.  We use Lemma \ref{localize} to assume $G_P = G$.
There are now two cases to consider.

\noindent
{\bf Case 1.} $P = (x,x_{i_1},\ldots,x_{i_d})$ with 
$\{x_{i_1},\ldots,x_{i_d}\} \subseteq \{x_2,\ldots,x_r\}$
of size $d > \min\{r-1,s\}$.

\noindent
This case can only happen if $r-1 \geq d > s$ simply because we must
take a subset of $r-1$ elements.  In this situation, $G_P$ is a clique of size $d+1 > s+1$.
But by Corollary \ref{assprimesmingraphs}, $P \not\in \Ass(R/J^e)$ for all $e  < d$
because $G_P$ is critically $(d+1)$-chromatic.
Hence $P \not\in \Ass(R/J^s)$ since $s < d$, thus  contradicting our
assumption on $P$.

\noindent
{\bf Case 2.} $P = (x,w,\ldots)$ with $w \not\in N_G(x)\cup \{x\}$.

\noindent
Suppose that $T \in J^{s-1}\setminus J^s$ is the annihilator
such that $J^s:T = P = (x,w,\ldots)$.  There then
exist $s$ minimal vertex covers $m_1,\ldots,m_s$ (not necessarily distinct) such
that
\[Tw = m_1m_2\cdots m_sM.\]
If a minimal vertex cover does not contain $x$, it must contain
$\{x_2,\ldots,x_r\}$, or equivalently, if a generator of $J$ is not divisible
by $x$, it is divisible by $x_2x_3\cdots x_r$.
After relabeling, we may assume that
$m_1,\ldots,m_a$ are divisible by $x$ while $m_{a+1},\ldots,m_s$
are divisible by $x_2x_3\cdots x_r$ (and not by $x$).  Note that because each minimal
vertex cover of $G$ must cover the clique $\{x,x_2,\ldots,x_r\}$, we
must also have that each $m_1,\ldots,m_a$ is divisible by at least $r-2$ variables
of $x_2,\ldots,x_r$. Moreover, if there was some $m_i \in \{m_1,\ldots,m_a\}$
that was divisible by $x_2\cdots x_r$, then since $x$ also
divides $m_i$, we would have that $m_i/x \in J$, contradicting
the fact that each $m_i$ is a minimal generator of $J$. Thus, each $m_1, \dots, m_a$ is divisible by exactly $r-2$ variables of $x_2, \dots, x_r$.

Since $w \not\in N_G(x) \cup \{x\}$, the above discussion implies
that the degree of the variables $\{x_2,\ldots,x_r\}$ in $T$ must be at least
\begin{align}
(r-2)a + (r-1)(s-a) = s(r-1)-a. \label{neweq1}
\end{align} 
At the same time, we must have $x^a|T$, that is, $T = x^aT'$.

Now, because $Tx \in J^s$, there exists $s$ minimal vertex covers $m'_1, \dots, m'_s$
such that
\[Tx = m'_1m'_2\cdots m'_sM'.\]
If $x|M'$, then this would imply $T \in J^s$; hence $x \nmid M'$.
Also, $Tx = (x^aT')x = x^{a+1}T'$. Thus, $x$ appears in $a+1$ of $m'_1,\ldots,
m'_d$.  After relabeling, say $x$ divides $m'_1,\ldots,m'_{a+1}$.  But
then $m'_{a+2},\ldots,m'_s$ must be divisible by $x_2\cdots x_r$. By a similar argument as above, the degree of the variables
$\{x_2,\ldots,x_r\}$ in the monomial $m'_1\cdots m'_{a+1}m'_{a+2}\cdots m'_s$ is exactly
\begin{align}
(r-2)(a+1) + (r-1)(s-a-1) = s(r-1)-a-1. \label{neweq2}
\end{align}

It follows from (\ref{neweq1}) and (\ref{neweq2}) that some $x_i$ must divide $M'$,
i.e., $M' = x_iM''$. Now suppose that $x_i$ divided all
of $m'_1\cdots m'_{a+1}$.  Since $x_i$ also divides $m_{a+2},\ldots,m_s$,
we have $x_i^s |T$ which contradicts Lemma \ref{monomialannihilator}
which says that $T|\bm_V^{s-1}$. Hence, there exists some variable $x_i$ such that $x_i | M'$ but $x_i$ does not divide at least one of $m'_1, \dots, m'_{a+1}$. In particular, we may assume that
\[m'_{a+1} = xx_2\cdots\hat{x_i}\cdots x_rn'_{a+1}.\]
But then
\begin{eqnarray*}
Tx &= &m'_1 \cdots m'_am'_{a+1}m'_{a+2} \cdots m'_s M' \\
  & =&m'_1 \cdots m'_a(xx_2\cdots\widehat{x_i}\cdots x_rn'_{a+1})m'_{a+2} \cdots m'_s(x_iM'') \\
  & =&m'_1 \cdots m'_a(x_2\cdots x_i \cdots x_rn'_{a+1})m'_{a+2}\cdots m'_s (xM'').
\end{eqnarray*}
In the above expression, we have swapped the $x_i$ in $M'=x_iM''$ with
the $x$ in $m'_{a+1}$.  It is straightforward to see that $(x_2\cdots x_i \cdots x_rn'_{a+1})$
still corresponds to a vertex cover of $G$ and consequently,
is an element of $J$.  However, dividing by $x$ on both sides gives
\[T = m'_1 \cdots m'_a(x_2\cdots x_i \cdots x_rn'_{a+1})m'_{a+2}\cdots m'_s (M''),\]
which is an expression of $T$ written as the product of $s$ generators of $J$,
that is $T \in J^s$, thus giving the desired contradiction.
\end{proof}

We shall need the following definition for one of our characterizations of perfect graphs.

\begin{definition}
An ideal $I \subset R$ has the \textbf{saturated chain property for associated primes}
if given any associated prime $P$ of $R/I$ that is not minimal, there exists an associated prime
$Q \subsetneq P$ with height$(Q)=$height$(P)-1$.
\end{definition}

We can now prove the main result of this section.

\begin{theorem} \label{perfectgraphs}
Let $G$ be a simple graph with cover ideal $J$. Then the following are equivalent:
\begin{enumerate}
\item $G$ is perfect;
\item For all $s \ge 1$, $P=(x_{i_1}, \dots, x_{i_r}) \in \Ass(R/J^s)$ if and only if
the induced graph on $\{x_{i_1},\dots,x_{i_r}\}$ is a clique of size $1 < r \le s+1$ in $G$; 
\item For all $\chi(G) > s \ge 1$, $P=(x_{i_1}, \dots, x_{i_r}) \in \Ass(R/J^s)$ if and only if
the induced graph on $\{x_{i_1},\dots,x_{i_r}\}$ is a clique of size $1 < r \le s+1$ in $G$; and
\item For all $s \geq 1$, $J^s$ has the saturated chain property for associated primes.
\end{enumerate}
\end{theorem}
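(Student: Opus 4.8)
The plan is to prove the cycle $(1)\Rightarrow(2)\Rightarrow(3)\Rightarrow(1)$ and then to attach $(4)$ via $(2)\Rightarrow(4)$ and $(4)\Rightarrow(1)$. Two facts will be used throughout: Lemma~\ref{localize}, which lets me replace any prime $P$ by the maximal ideal of the induced subgraph $G_P$; and the fact (noted in the proof of Theorem~\ref{simplicialvertex}) that the minimal primes of $R/J^s$ are exactly the height-two primes $(x_i,x_j)$ coming from the edges of $G$, so in particular every associated prime of $R/J^s$ has height at least two.

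First I would prove $(1)\Rightarrow(2)$. The ``if'' direction needs no perfection: a clique on $r$ vertices is critically $r$-chromatic, so if $G_P$ is a clique of size $1<r\le s+1$ then $d:=r-1\le s$, and Corollary~\ref{assprimescliques} gives $P\in\Ass(R/J^t)$ for all $t\ge d$, hence for $t=s$. For the ``only if'' direction, localize so that $G_P=G$ and $P=(x_1,\dots,x_n)$; by Corollary~\ref{classificationofass} there is a set $T\supseteq\{x_{1,1},\dots,x_{n,1}\}$ of shadows with $G^s_T$ critically $(s+1)$-chromatic. Since $G$ is perfect, $G^s$ is perfect by Lemma~\ref{expansion-perfect}, so $G^s_T$ is perfect and contains a clique on $\omega(G^s_T)=\chi(G^s_T)=s+1$ vertices; a critically $(s+1)$-chromatic graph cannot properly contain such a clique, so $G^s_T=\K_{s+1}$ and $|T|=s+1$. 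Hence $n\le s+1$, and for $i\ne i'$ the edge $\{x_{i,1},x_{i',1}\}$ of $G^s$ forces $\{x_i,x_{i'}\}\in\mathcal{E}_G$ by the definition of the expansion; therefore $G=G_P=\K_n$ with $1<n\le s+1$. The implication $(2)\Rightarrow(3)$ is immediate, since $(3)$ is $(2)$ restricted to $s<\chi(G)$.

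Next I would handle $(3)\Rightarrow(1)$, $(2)\Rightarrow(4)$, and $(4)\Rightarrow(1)$. For $(3)\Rightarrow(1)$, suppose $G$ is not perfect and choose a minimal imperfect induced subgraph $G'$ on a vertex set $W$; then $\chi(G')\ge 3$, and with $s:=\chi(G')-1$ we have $1\le s<\chi(G)$. By Lemma~\ref{minimalimperfect} and Lemma~\ref{localize}, the prime $x_W$ lies in $\Ass(R/J^s)$, so $(3)$ forces $G'$ to be a clique, contradicting its imperfection. For $(2)\Rightarrow(4)$, let $P\in\Ass(R/J^s)$ be non-minimal; by $(2)$ it is a clique of size $r$ with $3\le r\le s+1$ (it is not an edge, hence $r\ge 3$), and deleting one vertex yields a clique $Q\subsetneq P$ of size $r-1$ with $1<r-1\le s$, so $Q\in\Ass(R/J^s)$ by $(2)$ and $\height(Q)=\height(P)-1$. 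For $(4)\Rightarrow(1)$, suppose again $G$ is not perfect, take a minimal imperfect induced subgraph $G'$ on $W$ with $|W|=m\ge 3$, set $s=\chi(G')-1\ge 2$, and note $P=x_W\in\Ass(R/J^s)$ with $\height(P)=m\ge 3$, so $P$ is not minimal; by $(4)$ there is $Q\subsetneq P$ in $\Ass(R/J^s)$ of height $m-1$, necessarily $Q=x_{W\setminus\{w\}}$ for some $w$. Localizing, $G'\setminus\{w\}$ is perfect, so applying the already-proven implication $(1)\Rightarrow(2)$ to it gives $G'\setminus\{w\}=\K_{m-1}$; a short case analysis (whether $w$ is adjacent to all of $W\setminus\{w\}$, forcing $G'=\K_m$, or not, in which case recoloring $w$ with a non-neighbor's color shows $\chi(G')=m-1=\omega(G')$) shows $G'$ is perfect, a contradiction.

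The step I expect to be the main obstacle is the ``only if'' half of $(2)$ under hypothesis $(1)$, together with the closely related $(4)\Rightarrow(1)$: in each case one must exclude ``unexpected'' associated primes, and the essential leverage is that passing to the expansion $G^s$ preserves perfection (Lemma~\ref{expansion-perfect}), combined with the elementary but decisive observation that a perfect graph that is critically $(s+1)$-chromatic must be exactly $\K_{s+1}$. The remaining work --- localizing via Lemma~\ref{localize}, tracking shadows, and the small case analyses --- is routine bookkeeping.
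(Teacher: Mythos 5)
Your argument is correct, and for three of the four nontrivial implications it is essentially the paper's: $(1)\Rightarrow(2)$ via Corollary~\ref{classificationofass}/Theorem~\ref{assoc-depolar}, Lemma~\ref{expansion-perfect}, and the observation that a critically $(s+1)$-chromatic graph containing $\K_{s+1}$ must equal $\K_{s+1}$; $(3)\Rightarrow(1)$ via Lemma~\ref{minimalimperfect}; and $(2)\Rightarrow(4)$ by passing from a clique of size $r$ to one of size $r-1$ (you invoke statement (2) directly where the paper cites Corollaries~\ref{assprimesmingraphs} and~\ref{assprimescliques}, which is an equivalent and slightly cleaner bookkeeping; your implicit handling of $s=1$ as vacuous is fine). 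Where you genuinely diverge is $(4)\Rightarrow(1)$. The paper localizes to a minimal imperfect $G_P$, uses the saturated chain property to build an entire chain of associated primes from a height-two edge prime up to $P$, locates the first place the induced subgraph stops being a clique, and derives a contradiction from Theorem~\ref{simplicialvertex} (the simplicial-vertex theorem). You instead take only one step down the chain: from $x_W\in\Ass(R/J^{\chi(G')-1})$ you get a codimension-one associated prime $x_{W\setminus\{w\}}$, use minimal imperfection to know $G'\setminus\{w\}$ is perfect, apply the already-proven implication $(1)\Rightarrow(2)$ (via Lemma~\ref{localize}) to conclude $G'\setminus\{w\}=\K_{m-1}$, and finish with the elementary recoloring argument showing $\chi(G')=\omega(G')$, a contradiction. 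Your route avoids Theorem~\ref{simplicialvertex} entirely and needs only a single step of the saturated chain, so it is shorter and more self-contained; the paper's route is heavier but showcases the simplicial-vertex theorem, which is proved there in greater generality as a result of independent interest (a complete description of the associated primes containing a simplicial vertex), so the two proofs trade economy against the extra structural information the paper's auxiliary theorem provides.
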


\begin{proof}
$(1) \Rightarrow (2)$
Let $G$ be a perfect graph. Consider an associated prime $P \in \Ass(R/J^s)$
for some $s$. By Theorem~\ref{assoc-depolar}, $P$ corresponds to a minimal generator $\bm_T$
with $\chi(G^s_T) > s$, that is, a critical $(s+1)$-colorable induced subgraph of $G^s$.
Since $G$ is perfect, by Lemma~\ref{expansion-perfect}, $G^s$ is also perfect. Thus,
$\omega(G^s_T) = \chi(G^s_T) > s$. This implies that there exists a subset $S \subseteq T$
such that $G^s_S$ is a clique $\K_{s+1}$. However, in this case $G^s_S$ would be a critical
$(s+1)$-colorable induced subgraph in $G^s_T$. Thus, we must have $S = T$, and $G^s_T = \K_{s+1}$
is a clique. It is now easy to see that the support of the depolarization $\overline{\bm_T}$
is also a clique of size at most $s+1$. Thus, $G_P$ is a clique.

$(2) \Rightarrow (3)$   This is immediate.

$(3) \Rightarrow (1)$  Suppose that $G$ is not perfect.  Then there
exists $P \subseteq V_G$ such that $G_P$ is a minimal imperfect graph,
with $\chi(G_P) \leq \chi(G)$.  By Lemma \ref{minimalimperfect}, we then
have $P \in \Ass(R/J^{\chi(G_P)-1})$, but $G_P$ is not a clique.

$(2) \Rightarrow (4)$.  If $s=1$, then $J$ has no embedded primes, that is, all
of its associated primes are minimal.  So the statement holds.  Now suppose
that $s \geq 2$, and that $P \in \Ass(R/J^s)$ is an embedded prime,
and thus, height$(P) = r > 2$.  By $(2)$, the induced
graph on $G_P$ is a clique of size $r \leq s+1$.  After relabeling,
we may assume that $P = (x_1,\ldots,x_r)$.  But then there
exists a clique of size $r-1$ on $\{x_1,\ldots,x_{r-1}\}$.  Now because
this clique is critically $(r-1)$-chromatic, we know
that $Q = (x_1,\ldots,x_{r-1}) \in \Ass(R/J^{r-2})$ by Corollary~\ref{assprimesmingraphs}. But then by Corollary~\ref{assprimescliques}
we have $Q \in \Ass(R/J^s)$ since $r-1 \leq s$.  Thus $J^s$
has the saturated chain property for associated primes because height$(Q)+ 1 =$ height$(P)$ and 
$Q \subsetneq P$.

$(4) \Rightarrow (1)$.  Suppose that $G$ is not perfect.  So, there exists
some subset $P \subseteq V$ such that $G_P$ is minimally imperfect.  By using
Lemma \ref{localize}, we can assume that that $P = V$.  Thus, by
Lemma \ref{minimalimperfect}, we have $(x_1,\ldots,x_n) \in \Ass(R/J^{\chi(G)-1})$.
Because $J^{\chi(G)-1}$ has the saturated chain property for associated primes,
we can find prime ideals $Q_2,Q_3,\ldots,Q_{n-1} \in \Ass(R/J^{\chi(G)-1})$ such that
\[ (x_i,x_j) = Q_2 \subset Q_3 \subset \cdots \subset Q_{n-1} \subset Q_n = (x_1,\ldots,x_n).\]
where height$(Q_i)+1 =$ height$(Q_{i+1})$ for $i=2,\ldots,n-1$.  Now
the minimal prime of height two corresponds to a clique of size two,
i.e., $G_{Q_2}$ is the clique $\K_2$. On the other hand,
$G_{Q_n} = G$ is not a clique since a clique is perfect, but $G$ is not perfect.
Thus, there exists a minimal $i$ such that $G_{Q_i}$ is a clique of size $|Q_i|$
and $G_{Q_{i+1}}$ is not a clique.  If $Q_i = (x_{i_1},\ldots,x_{i_r})$, then
$Q_{i+1} = (x_{i_1},\ldots,x_{i_r},x_{i_{r+1}})$ for some $x_{i_{r+1}}$. Since $G_{Q_{i+1}}$
is not a clique, $x_{i_{r+1}}$ is not adjacent to one of $x_{i_1},\ldots,x_{i_r}$.
Without loss of generality, say $x_{i_1}$ is not adjacent to $x_{i_{r+1}}$.

Because $Q_{i+1} \in
\Ass(R/J^{\chi(G)-1})$,
by Lemma \ref{localize}, we have that \[Q_{i+1} \in \Ass(R[Q_{i+1}]/J(G_{Q_{i+1}})^{\chi(G)-1}).\]
But in the graph $G_{Q_{i+1}}$, the vertex $x_{i_1}$ is a simplicial
vertex.  Because $x_{i_1} \in Q_{i+1}$, by Theorem \ref{simplicialvertex},
the vertex $x_{i_1}$ must be adjacent to every other vertex
$\{x_{i_2},\ldots,x_{i_r},x_{i_{r+1}}\}$.  But 
$x_{i_1}$ and $x_{i_{r+1}}$ are not adjacent.  Thus $G$ must be perfect.
\end{proof}

\begin{remark}  Using Theorem \ref{perfectgraphs} (3), we can check
if a graph is perfect in a finite number of steps.
\end{remark}

For perfect graphs, we have the persistence of associated primes
and the bound of Corollary \ref{stablebound} becomes  an equality:

\begin{corollary} \label{persist-graph}
Let $G$ be a perfect graph with cover ideal $J$.  Then
\begin{enumerate}
\item $\Ass(R/J^s) \subseteq \Ass(R/J^{s+1})$ for all integers $s \geq 1$.
\item
\[\bigcup_{s=1}^\infty \Ass(R/J^s) = \bigcup_{s=1}^{\chi(G)-1} \Ass(R/J^s).\]
\end{enumerate}
\end{corollary}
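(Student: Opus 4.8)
The plan is to read off both statements from the clique description of associated primes furnished by Theorem~\ref{perfectgraphs}; essentially all the work has already been done there and in Corollary~\ref{assprimescliques}, so this corollary is a bookkeeping consequence.

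For part (1), I would take an arbitrary $P \in \Ass(R/J^s)$. Since $G$ is perfect, Theorem~\ref{perfectgraphs}(2) tells us that $G_P$ is a clique whose size $r$ satisfies $1 < r \le s+1$. But then $1 < r \le s+1 \le (s+1)+1$, so applying Theorem~\ref{perfectgraphs}(2) with $s$ replaced by $s+1$ gives $P \in \Ass(R/J^{s+1})$. (Alternatively, since $G_P = \mathcal{K}_r$ has $\chi(G_P) = r$, Corollary~\ref{assprimescliques}(a) with $d = r-1$ yields $P \in \Ass(R/J^t)$ for every $t \ge r-1$, and $s+1 \ge r-1$.) This proves $\Ass(R/J^s) \subseteq \Ass(R/J^{s+1})$ for all $s \ge 1$.

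For part (2), the inclusion $\bigcup_{s=1}^{\chi(G)-1}\Ass(R/J^s) \subseteq \bigcup_{s=1}^\infty \Ass(R/J^s)$ is trivial. For the reverse inclusion, let $P \in \Ass(R/J^s)$ for some $s \ge 1$. By Theorem~\ref{perfectgraphs}(2) again, $G_P$ is a clique of some size $r$ with $1 < r \le s+1$. A clique of size $r$ forces $\omega(G) \ge r$, and perfection gives $\chi(G) = \omega(G) \ge r$, hence $r - 1 \le \chi(G) - 1$. Now apply Theorem~\ref{perfectgraphs}(2) in the reverse direction with $s$ replaced by $r-1$: the graph $G_P$ is a clique of size $r$ with $1 < r \le (r-1)+1$, so $P \in \Ass(R/J^{r-1})$, and since $1 \le r-1 \le \chi(G)-1$ this places $P$ in $\bigcup_{s=1}^{\chi(G)-1}\Ass(R/J^s)$. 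This establishes the claimed equality; combined with Corollary~\ref{stablebound}, it shows that for a perfect graph the stabilization index is exactly $\chi(G)-1$.

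I do not anticipate a serious obstacle here: the substantive content lives in Theorem~\ref{perfectgraphs} and Corollary~\ref{assprimescliques}. The only points requiring a little care are making sure the inequality chains ($r-1 \le s+1$ in part (1), and $r \le \chi(G)$ together with $r-1 \ge 1$ in part (2)) are invoked in the correct direction, and checking the mild degenerate situations — e.g.\ a bipartite perfect graph, where the only cliques have size two, so every $\Ass(R/J^s)$ equals $\Ass(R/J)$ and the union already stabilizes at $s = 1 = \chi(G)-1$, consistent with the statement; recall here that $\HH$ has no isolated vertices and each edge has size $\ge 2$, so there is at least one edge and $\chi(G) \ge 2$.
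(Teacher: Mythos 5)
Your proof is correct and takes essentially the same approach as the paper: both parts are read off the clique characterization in Theorem~\ref{perfectgraphs}(2), with Corollary~\ref{assprimescliques} available as backup. The only cosmetic difference is in part (2), where the paper uses the persistence of clique primes (Corollary~\ref{assprimescliques}) to conclude $P \in \Ass(R/J^{\chi(G)-1})$ directly, while you place $P$ in $\Ass(R/J^{r-1})$ with $r-1 \le \chi(G)-1$; both land in the stated union.
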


\begin{proof}  The first statement follows directly from Theorem \ref{perfectgraphs} (2).
For (2), suppose that $P \in \bigcup_{s=1}^\infty \Ass(R/J^s)$, i.e., 
$P \in \Ass(R/J^s)$ for some $s$.  Then  by Theorem \ref{perfectgraphs},
we have that $G_P$ is a clique of size at most $s+1$ in $G$.
Moreover $\chi(G_P) \leq \chi(G)$.  So, $G_P$ is a clique of size
at most $\min\{s+1,\chi(G)\}$.  By Corollary~\ref{assprimescliques},
this means that $P \in \Ass(R/J^{\chi(G)-1})$.  Thus
$\bigcup_{s=1}^\infty \Ass(R/J^s) \subseteq \bigcup_{s=1}^{\chi(G)-1} \Ass(R/J^s)$, as
desired.
\end{proof}

\begin{remark}
Theorem~\ref{perfectgraphs} enables us to describe a new infinite family 
with the saturated chain property for associated primes.
This property has been studied only in very special cases, mostly initial ideals of
$\mathcal A$-graded ideals \cite{Altmann,HostenThomas} and special
initial ideals of prime ideals \cite{Taylor}; it is of independent, purely algebraic
interest. S. Ho\c{s}ten and R. Thomas note that the condition is ``a rare property for an arbitrary monomial ideal'' \cite{HostenThomas}.
\end{remark}


\end{document}